\newcommand{\longsquiggly}{\xymatrix{{}\ar@{<~>}[r]&{}}}
\theoremstyle{plain}
\newtheorem{thm}{Theorem}[section]
\newtheorem{prop}[thm]{Proposition}
\newtheorem{cor}[thm]{Corollary}
\theoremstyle{definition}
\newtheorem{definition}[thm]{Definition}
\newtheorem{con}[thm]{Conjecture}
\newtheorem{exmpl}[thm]{Example}
\theoremstyle{remark}
\newtheorem{rem}{Remark}
\newcommand{\ind}{\mathbf 1}
\newcommand{\R}{\mathbb{R}}
\renewcommand{\d}{\textnormal{ d}}
\newcommand{\lip}{\textnormal{Lip}}
\newcommand{\grad}{\nabla}
\newcommand{\ball}{\mathcal{B}}
\newcommand{\hidethis}[1]{}
\newcommand{\E}{\mathbb{E}}
\newcommand{\support}{\textnormal{supp}}
\newcommand{\conv}{\textnormal{Conv}}
\newcommand{\supp}{\textnormal{support}}
\newcommand{\vol}{\textnormal{vol}}
\newcommand{\dom}{\textnormal{Dom}}
\begin{document}

\title{The Kneser--Poulsen phenomena for entropy}
\author{Gautam Aishwarya}
\address{Technion Israel Institute of Technology, Faculty of Mathematics, Technion City, Haifa 3200003, Israel.}
\email{gautama@campus.technion.ac.il}
\author{Dongbin Li}
\address{University of Alberta, Department of Mathematical and Statistical Sciences, Edmonton, AB T6G 2R3, Canada.}
\email{dongbin@ualberta.ca}
\begin{abstract}
     The Kneser--Poulsen conjecture asserts that the volume of a union of balls in Euclidean space cannot be increased by bringing their centres pairwise closer. We prove that its natural information-theoretic counterpart is true. This follows from a complete answer to a question asked in \cite{AishwaryaAlamLiMyroshnychenkoZatarainVera23} about Gaussian convolutions, namely that the R\'enyi entropy comparisons between a probability measure and its contractive image are preserved when both undergo simultaneous heat flow. An inequality that unifies Costa's result on the concavity of entropy power with the entropic Kneser--Poulsen theorem is also presented.
    
\end{abstract}
\thanks{{\it MSC classification}:
		37C10, 
            94A17, 
            52A40, 
            52A20. 
	\\\indent GA was supported by ISF grant 1468/19 and NSF-BSF grant DMS-2247834. DL acknowledges the support of the Natural Sciences and Engineering Research Council of Canada and the Department of Mathematical and Statistical Sciences at the University of Alberta.
}

\maketitle
\section{Introduction and main results}
Owing to the striking resemblance between various phenomena in convex geometry and information theory, one is naturally led to the study of parallels between the two subjects. This direction of research goes back at least to the work of Costa and Cover \cite{CostaCover84} in the early 1980's, when they explicitly observed the similarity between the Brunn--Minkowski inequality in convex geometry and the Entropy Power inequality in information theory, both being cornerstone results in their respective fields. A connection between these inequalities was perhaps observed even earlier by Lieb, appearing implicitly in the work \cite{Lieb78}. From this point onwards, one finds a continuous stream of works elaborating the underlying dictionary between convex geometry and information theory while enriching both fields in the process (for example, \cite{DemboCoverThomas91, GuleryuzLutwakYangZhang02, BobkovMadiman12, FradeliziMarsiglietti14}). We shall refrain from giving a general overview of this vibrant field and refer the interested reader to the beautifully presented survey article \cite{MadimanMelbourneXu17} (and the references therein) for a bigger picture.

In the present work, we aim to supplement the evidence in favour of the Kneser--Poulsen conjecture, as well as offer a new potential tool, by observing that the mirroring entropy-version holds. Recall that, the Kneser--Poulsen conjecture in convex and discrete geometry asserts the intuitive statement that volume of the union of a finite collection of balls of a fixed radius cannot increase if their centres are brought closer together.
\begin{con}[Kneser--Poulsen, \cite{Kneser55, Poulson54}] \label{con: kporiginal}
Let $\{x_{1} , \ldots , x_{k}\}$ and $\{y_{1}, \cdots , y_{k}\}$ be two sets of points in $\mathbb{R}^n$ such that $\Vert y_{i} - y_{j} \Vert_{2} \leq \Vert x_{i} - x_{j} \Vert_{2}$ for all $i, j \in \{1, \ldots, k\}$. If $r> 0$, then we have:
\begin{equation} \label{eq: kporiginal}
\vol \left( \bigcup_{i=1}^{k} \ball (y_{i}, r) \right) \leq \vol \left( \bigcup_{i=1}^{k} \ball (x_{i}, r) \right),
\end{equation}
where $\vol$ is the Lebesgue measure on $(\R^n, \Vert \cdot \Vert_{2})$ and $\mathcal{B}(x,r)$ is the ball of radius $r$ centred at $x$.
\end{con}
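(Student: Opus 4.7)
The plan is to attempt Conjecture~\ref{con: kporiginal} by reducing it to a continuous-motion argument and then applying Csik\'os's variational identity for volumes of unions of balls. Given configurations $\{x_{i}\}$ and $\{y_{i}\}$ with $\Vert y_{i}-y_{j}\Vert_{2}\leq \Vert x_{i}-x_{j}\Vert_{2}$ for all $i,j$, I would first exhibit a smooth one-parameter family $\{z_{i}(t)\}_{t\in[0,1]}$ with $z_{i}(0)=x_{i}$, $z_{i}(1)=y_{i}$ along which every pairwise distance $t\mapsto \Vert z_{i}(t)-z_{j}(t)\Vert_{2}$ is nonincreasing. This is not always possible inside $\R^{n}$ itself, but the Bezdek--Connelly doubling trick embeds the configurations into $\R^{2n}$ via $x_{i}\mapsto(x_{i},0)$ and $y_{i}\mapsto(0,y_{i})$ and interpolates by a block rotation, producing a genuine smooth contracting isotopy at the cost of doubling the ambient dimension.

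Once such a contracting motion is in hand, the next step is Csik\'os's formula: along any smooth motion of the centres, $\frac{\dd}{\dd t}\vol\bigl(\bigcup_{i} \ball(z_{i}(t),r)\bigr)$ equals a sum over pairs $(i,j)$ of a nonnegative geometric weight (essentially an $(n{-}1)$-volume on the face of the power diagram separating $\ball(z_{i},r)$ and $\ball(z_{j},r)$) multiplied by $\frac{\dd}{\dd t}\Vert z_{i}(t)-z_{j}(t)\Vert_{2}$. Under a contracting motion every summand is nonpositive, so the total volume is monotonically nonincreasing in $t$, and comparing the endpoints $t=0$ and $t=1$ delivers \eqref{eq: kporiginal}.

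The main obstacle, and the reason this famous conjecture remains open in general, is that the doubling trick lands in $\R^{2n}$, where unions of balls have a different combinatorial structure than in $\R^{n}$; only when $n\leq 2$ can one recover the original inequality from the higher-dimensional one by slicing or projection arguments. Given the tools this paper develops, an alternative I would pursue is to run the variational argument at the level of entropy rather than volume: replace the indicator $\ind_{\bigcup_{i} \ball(x_{i},r)}$ by a heat-smoothed version of the associated sum of point masses, apply the entropic Kneser--Poulsen theorem announced in the abstract to those heat-flow densities, and then attempt to descend to \eqref{eq: kporiginal} via a zero-temperature limit. The hard part will be that the natural entropic proxy compares log-sum-exp mixtures of Gaussians rather than the true union of balls, and controlling this error quantitatively as the smoothing parameter tends to zero is delicate enough that it is not at all clear the entropic inequality actually implies the geometric one in any dimension where the latter is not already known.
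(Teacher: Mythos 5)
This statement is an open conjecture, not a theorem; the paper states it only to provide context and never claims or attempts a proof of it, and you correctly recognise this. Your sketch is an accurate survey of the two most relevant lines of attack rather than a genuine derivation, and the honesty about why both fail is the right call.

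On the details: your description of the Bezdek--Connelly strategy is essentially right. They combine Csik\'os's variational formula, which expresses the derivative of $\vol\bigl(\bigcup_i \ball(z_i(t),r)\bigr)$ along a smooth motion as a pair-weighted sum of $\frac{\dd}{\dd t}\Vert z_i - z_j\Vert_2$ terms with nonnegative geometric weights, with a doubling trick that embeds both configurations into $\R^{2n}$ and interpolates by a block rotation so that all pairwise distances decrease monotonically. (The paper uses exactly this rotation, in the form $S_t(x,0)=\bigl(\tfrac{x+T(x)}{2}+(\cos\pi t)\tfrac{x-T(x)}{2},\,(\sin\pi t)\tfrac{x-T(x)}{2}\bigr)$, but for the entropic problem in Section~3, not for volumes.) You also correctly identify the crux of why this does not settle the conjecture beyond $n=2$: the monotonicity obtained in $\R^{2n}$ is for $2n$-dimensional volume, and descending back to $n$-dimensional volume requires a dimension-reduction step that Bezdek and Connelly could only carry out when $n\le 2$ (and it is somewhat more intricate than a plain slicing argument, relying on formulas relating higher-dimensional volumes to integrals of lower-dimensional ones).

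Your second suggestion, passing through the entropic result of this paper and then taking a zero-temperature limit, is likewise correctly flagged as not actually closing the gap. The paper itself is explicit about this: Theorem~\ref{thm: main} is proved only for Gaussian noise, and the authors note that an analogue for uniform noise on a ball (not Gaussian) would be what is needed to recover Conjecture~\ref{con: kp}. Heat-smoothed indicators converge to indicators, but Shannon entropy does not converge to log-volume along that limit in a way that would transfer the inequality, so the entropic theorem does not imply the geometric conjecture. In short, you have not proved the statement, but you were not expected to: your assessment of why it remains open and how the paper's contribution relates to it is sound.
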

\begin{rem}
    Sometimes a more general version is considered, where the radii of the balls are allowed to be different. However, we will restrict ourselves to the original formulation as stated above. 
\end{rem}
If $K = \{ x_{1} , \ldots , x_{k} \}$, the set $\bigcup_{i=1}^{k} \ball (x_{i}, r)$ can be rewritten as the Minkowski sum $K + r\mathcal{B}$, where $\ball$ denotes the unit ball $\ball (0, 1)$. By an elementary approximation-from-within argument, one can go from finite sets to compact sets in order to rephrase Conjecture \ref{con: kporiginal} in the manner below.
\begin{con} \label{con: kp}
For every contraction (that is, a $1$-Lipschitz map) $T: (K, \Vert \cdot \Vert_{2}) \to (\R^n, \Vert \cdot \Vert_{2})$ defined on a compact set $K 
\subseteq \R^n$, $r>0$, we have
\begin{equation}
\vol(T[K] + r \ball) \leq \vol (K + r \ball).
\end{equation}
\end{con}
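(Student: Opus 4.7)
The plan is a routine approximation-from-within argument: I would sandwich $\vol(T[K] + r\ball)$ between quantities governed by Conjecture \ref{con: kporiginal} applied to a finite $\epsilon$-net of $K$, and then remove the slack by continuity of Lebesgue measure.

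In one direction there is nothing to prove: given $\{x_i\}, \{y_i\}$ as in Conjecture \ref{con: kporiginal}, the map $T : x_i \mapsto y_i$ is automatically $1$-Lipschitz on the finite set $K = \{x_1, \ldots, x_k\}$, so applying the contractive formulation to this $T$ recovers \eqref{eq: kporiginal} verbatim.

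For the converse, assume Conjecture \ref{con: kporiginal} and fix a compact $K \subseteq \R^n$, a contraction $T \colon K \to \R^n$, and $r > 0$. For each $\epsilon > 0$ I would choose a finite $\epsilon$-net $\{x_1, \ldots, x_k\} \subseteq K$ and set $y_i = T(x_i)$. Since $T$ is $1$-Lipschitz, $\|y_i - y_j\|_2 \leq \|x_i - x_j\|_2$, so Conjecture \ref{con: kporiginal} applied with the enlarged radius $r + \epsilon$ gives
\begin{equation}
\vol\!\left( \bigcup_{i=1}^{k} \ball(y_{i}, r + \epsilon) \right) \leq \vol\!\left( \bigcup_{i=1}^{k} \ball(x_{i}, r + \epsilon) \right) \leq \vol\bigl( K + (r+\epsilon) \ball \bigr),
\end{equation}
where the last step is monotonicity of volume in the set. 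The $\epsilon$-net property combined with the Lipschitz hypothesis gives $T[K] \subseteq \{y_1, \ldots, y_k\} + \epsilon \ball$, and hence $T[K] + r \ball \subseteq \bigcup_{i=1}^{k} \ball(y_i, r + \epsilon)$. Chaining these two observations yields $\vol(T[K] + r\ball) \leq \vol(K + (r+\epsilon)\ball)$.

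Finally, sending $\epsilon \downarrow 0$, the compact sets $K + (r + \epsilon)\ball$ decrease to the compact set $K + r\ball$, so continuity of Lebesgue measure from above gives $\vol(K + (r+\epsilon)\ball) \to \vol(K + r \ball)$, finishing the proof. The only step requiring a small amount of care is the radius enlargement from $r$ to $r + \epsilon$, needed to absorb the $\epsilon$-net discretisation error on the image side; this slack is then disposed of by the concluding continuity argument. I do not foresee any genuine obstacle, which aligns with the authors' description of the passage as elementary.
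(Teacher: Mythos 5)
Your proposal is correct and supplies exactly the ``elementary approximation-from-within argument'' the paper alludes to but does not write out: discretise $K$ by a finite $\epsilon$-net, absorb the discretisation error by inflating the radius to $r+\epsilon$, apply the finite-point version, and pass to the limit. The one small point worth making explicit is that for the final continuity-from-above step you should take $\ball$ to be the \emph{closed} unit ball (so that $\bigcap_{\epsilon>0}\bigl(K+(r+\epsilon)\ball\bigr)=K+r\ball$ for compact $K$); with the open ball the intersection is the closed $r$-neighbourhood, but since $\{x:\operatorname{dist}(x,K)=r\}$ is Lebesgue-null for compact $K$, the volumes agree and the conclusion is unaffected.
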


Beyond the $n=2$ case which was resolved by Bezdek and Connelly \cite{BezdekConnelly02}, very little is known. For a general dimension $n$, the Kneser--Poulsen conjecture has been established in various cases, under rather strong restrictions on the set $K$ and the map $T$. For example, Csik\'os \cite{Csiskos98} proved Conjecture \ref{con: kporiginal} for continuous contractions (see Definition \ref{def: continuous contrac}). A modification of the formulas for volume obtained by Csik\'os play a key role in Bezdek and Connelly's proof in the plane, where they are delicately combined with an old trick (used previously, for example, by Alexander \cite{Alexander85}) to move the $x_{i}$ to the $y_{i}$ in a larger ambient space. Other such examples include the more recent work of Bezdek and Nasz\'odi \cite{BezdekNaszodi18} which demonstrates the conjecture when $k$ is large, for uniform contractions, that is, when there exists $\lambda > 0$ such that $\Vert y_{i} - y_{j} \Vert_{2} < \lambda < \Vert x_{i} - x_{j} \Vert_{2}$ for all $i \neq j$. In the same work, the authors also settle the case when the pairwise distances are reduced in every coordinate, for any $k$. For the current state-of-the-art regarding the Kneser--Poulsen conjecture, including many foundational works along with recent exciting developments that we have skipped here, we refer the reader to \cite{Bezdek13, KuperbergToth22}. However, despite the progress made so far, the Kneser--Poulsen conjecture remains largely out of reach. This is perhaps due to the fact that, at present, there is no way to deal with arbitrary contractions in a manner amicable to the geometric computations required in this context.  

In the pioneering comparison of the Brunn--Minkowski inequality and the Entropy Power inequality discussed earlier, one immediately observes that Euclidean balls correspond to Gaussian measures and (the logarithm of) volume corresponds to the Shannon--Boltzmann entropy. Indeed, on one hand, the Brunn--Minkowski inequality can be stated in the form 
\begin{equation} \label{eq: BMaddcombform}
\vol (A + B) \geq \vol (A^{\ast} + B^{\ast}),
\end{equation}
where $A^{\ast}, B^{\ast}$ denote Euclidean balls having the same volume as $A,B$, respectively. On the other hand, the Entropy Power inequality asserts that 
\begin{equation} \label{eq: EPIaddcombform}
h(X + Y) \geq h(X^{\ast} + Y^{\ast}), 
\end{equation}
where $h(\cdot)$ denotes the Shannon--Boltzmann entropy (see Definition \ref{def: entropy}), $X,Y$ are independent random vectors, $X^{\ast}, Y^{\ast}$ are independent standard Gaussian random vectors scaled to have the same entropy as $X,Y$, respectively. 

These correspondences, summarised in Table \ref{tab: dictionary}, indicate that the Shannon--Boltzmann entropy of the heat flow $X + \sqrt{s}Z$ is the natural information-theoretic analogue of the volume of a tube $ K + r \ball$. In this paper, we prove that the information-theoretic analogue mimics the geometric phenomena predicted by the Kneser--Poulsen conjecture \ref{con: kp}. Before we state our main results concretely, which are more general than the last claim, we set up some notation.

\begin{table}[ht]
    \centering
\begin{tblr}[caption={"sdsdssdfsdfds"}]{|[2pt,cyan3]c|[dashed,purple3]c|[2pt,cyan3]}
\hline[2pt,cyan3]
\textbf{Geometry}   & \textbf{Information theory}\\
\hline[purple3,dashed]
sets $K$ & random vectors $X$   \\
\hline[dotted]
Minkowski sum $K+L$  & independent sum $X+Y$     \\
\hline[dotted]
volume $\vol(K)$  & entropy $h(X)$     \\
\hline[dotted]
euclidean ball $\ball$   & standard Gaussian $Z$     \\
\hline[dotted]
 $\vol(K + r \ball)$ & $h(X + \sqrt{s}Z)$     \\
\hline[2pt,cyan3]
\end{tblr}
\caption{}
\label{tab: dictionary}
\end{table}

\subsubsection*{Some notation and definitions}
Throughout, unless stated otherwise, all sums $X + Y$ of random vectors will be sums of independent random vectors. The density of a random vector $X$ with respect to the Lebesgue measure (if it exists) will sometimes be denoted by $f_{X}$. The letter $Z$ is reserved for a random vector that has the standard Gaussian distribution in the ambient space, that is, if the discussion is in $\R^n$ we will have $f_{Z}(x) = \frac{1}{(2 \pi)^{n/2}}e^{- \frac{\Vert x \Vert^{2}_{2}}{2}}$. The word ``Gaussian'' will also be used for scalings of $Z$. We will write $\dom( f)$ for the domain of partially defined functions. For a set $K$, $\conv(K)$ will denote the convex hull of $K$. 

 \begin{definition} \label{def: entropy}
Let $X$ be an $\R^n$-valued random vector with density $f$ with respect to the Lebesgue measure. Then, the R\'enyi entropy of order $\alpha \in (0,1)\cup (1, \infty)$ of $X$ is given by,
\begin{equation}
h_{\alpha}(X) = \frac{1}{1 - \alpha} \log \int_{\R^n} f^{\alpha} \d x. 
\end{equation}
The R\'enyi entropy of orders $0$ and $\infty$ are obtained via taking respective limits,
\begin{equation}
\begin{split}
& h_{0}(X) = \lim_{\alpha \to 0^{+}} h_{\alpha}(X) = \log \vol (\supp(f)), \\ 
&h_{\infty}(X) = \lim_{\alpha \to \infty} h_{\alpha}(X)= - \log \Vert f \Vert_{\infty}. \\
\end{split}
\end{equation}
We define the R\'enyi entropy of order $1$ by,  
\begin{equation} \label{eq: defshannon}
h_{1}(X)=   -\int f \log f \d x, 
\end{equation}
if this integral exists. In this case, $h_{1}(X)$ is simply denoted by $h(X)$, and is called the Shannon--Boltzmann entropy.
\end{definition}
\begin{rem}
Conditions sufficient for $\lim_{\alpha \to 1^{+}} h_{\alpha} (X)$ or $\lim_{\alpha \to 1^{-}} h_{\alpha} (X)$ to equal $h_{1}(X)$ can be found in \cite[Lemma V.I]{MadimanWang14}. Moreover, when we write $h_{1}(X)$ (or $h(X)$) in this work (for example, in Theorems \ref{thm: main} and \ref{thm: entropickp}), it should be understood that we also assert the existence of the integral in Equation \eqref{eq: defshannon} (although it may take the value $+ \infty$). 
\end{rem}
For an $\R^{n}$-valued random vector $X$ with distribution $\mu$ having density $f$ with respect to the Lebesgue measure, we will abuse notation and use $h_{\alpha}(X), h_{\alpha} (\mu) ,$ and $h_{\alpha}(f)$, interchangeably. A property of R\'enyi entropies that we shall use later is that $h_{\alpha}(X,Y) = h_{\alpha}(X) + h_{\alpha}(Y)$ holds if $X,Y$ are independent vectors taking values in $\R^n, \R^m$, respectively. 
\begin{definition} \label{def: continuous contrac}
  Let $T : K \to \R^n$ be a contraction, i.e., a $1$-Lipschitz map $(K, \Vert \cdot \Vert_{2}) \to (\R^n, \Vert \cdot \Vert_{2})$. The map $T$ is said to be a continuous contraction if each point $x \in K$ can be joined to $T(x) \in T[K]$ by a curve $c_{x}: [0,1] \to \R^n$ such that $\Vert c_{x}(t) - c_{y}(t) \Vert_{2}$ is monotonically decreasing. In this case, we call $T_{t}: x \mapsto c_{x}(t)$ a continuously contracting family of maps, and the curve $c_{x}(t) = T_{t}(x)$ is called the trajectory of $x$. In particular, $T_{0}$ is the identity map.
\end{definition}
\subsection{The main result}
It is simple to check that $h_{\alpha}(X) \geq h_{\alpha}(T(X))$ if $T$ is a contraction. However, when the distributions of $X$ and $T(X)$ both undergo simultaneous heat flow to $X + \sqrt{s}Z$, and $T(X) + \sqrt{s}Z$, respectively, there may not be a contraction mapping the former to the latter. Our first main result says that their R\'enyi entropies may nevertheless be compared.
\begin{thm}\label{thm: main}
    Let $T: \R^n \to \R^n$ be a $1$-Lipschitz map. Then for every $\R^n$-valued random vector $X$, $s > 0$, the inequality  
    \begin{equation}
h_{\alpha}(X + \sqrt{s}Z) \geq h_{\alpha}(T(X) + \sqrt{s}Z),
    \end{equation}
    holds for all $\alpha \in [1, \infty]$, as well as for $\alpha \in [0,1)$ under the additional assumption that $\E \Vert X \Vert_{2} < \infty$. 
\end{thm}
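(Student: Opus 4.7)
The cleanest part of the argument is for integer orders $\alpha=k\geq 2$, where a symmetric representation renders monotonicity under contractions transparent. Letting $X_{1},\ldots,X_{k}$ be independent copies of $X$ and writing $\gamma_{s}$ for the density of $\sqrt{s}Z$, completing the square around the mean $\bar{x}=\tfrac{1}{k}\sum_{i}x_{i}$ and integrating out $y$ yields
\begin{equation}
\int_{\R^{n}}\prod_{i=1}^{k}\gamma_{s}(y-x_{i})\,\d y=\frac{1}{(2\pi s)^{n(k-1)/2}\,k^{n/2}}\exp\!\left(-\frac{1}{2sk}\sum_{i<j}\|x_{i}-x_{j}\|_{2}^{2}\right).
\end{equation}
Substituting the $X_{i}$ for the $x_{i}$ and taking expectations, one finds $\|f_{X+\sqrt{s}Z}\|_{k}^{k}=C_{k,s,n}\,\E\exp(-\tfrac{1}{2sk}\sum_{i<j}\|X_{i}-X_{j}\|_{2}^{2})$, with the analogous formula for $T(X)+\sqrt{s}Z$ having $T(X_{i})-T(X_{j})$ in place of $X_{i}-X_{j}$. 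The 1-Lipschitz hypothesis makes the exponential pointwise larger in the pushforward version, so $\|f_{T(X)+\sqrt{s}Z}\|_{k}\geq\|f_{X+\sqrt{s}Z}\|_{k}$; since $1-k<0$, this gives $h_{k}(X+\sqrt{s}Z)\geq h_{k}(T(X)+\sqrt{s}Z)$ for every integer $k\geq 2$.

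For the remaining values $\alpha\in[1,\infty]$, my plan is a heat-flow monotonicity argument on
\begin{equation}
F_{\alpha}(t):=h_{\alpha}(X+\sqrt{t}Z)-h_{\alpha}(T(X)+\sqrt{t}Z), \qquad t>0.
\end{equation}
Both densities solve the heat equation, so differentiating yields a de Bruijn-type identity $\tfrac{\d}{\d t}h_{\alpha}(Y_{t})=\tfrac{\alpha}{2}J_{\alpha}(Y_{t})$ with $J_{\alpha}(Y):=\frac{\int f_{Y}^{\alpha-2}|\nabla f_{Y}|^{2}}{\int f_{Y}^{\alpha}}$ (a generalised Fisher information, reducing to the classical one at $\alpha=1$). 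A first-order asymptotic expansion, based on the scaling $h_{\alpha}(X+\sqrt{t}Z)=h_{\alpha}(X/\sqrt{t}+Z)+\tfrac{n}{2}\log t$ and the 1-Lipschitz consequence $\E\|T(X)-T(X')\|_{2}^{2}\leq\E\|X-X'\|_{2}^{2}$ for an independent copy $X'$, yields $F_{\alpha}(t)\to 0^{+}$ as $t\to\infty$. Combined with the Fisher-type comparison $J_{\alpha}(X+\sqrt{t}Z)\leq J_{\alpha}(T(X)+\sqrt{t}Z)$ for every $t>0$, this would force $F_{\alpha}$ to be non-increasing with limit zero, giving $F_{\alpha}(s)\geq 0$. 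The case $\alpha=\infty$ can alternatively be obtained by taking $\alpha\to\infty$ in the integer result.

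The sub-unit regime $\alpha\in[0,1)$ is structurally similar, but $(1-\alpha)^{-1}>0$ reverses the required inequality to $\int f_{X+\sqrt{s}Z}^{\alpha}\geq\int f_{T(X)+\sqrt{s}Z}^{\alpha}$. The hypothesis $\E\|X\|_{2}<\infty$ is precisely what is needed to guarantee convergence of the $L^{\alpha}$ integrals and the vanishing of $F_{\alpha}$ at infinity; the heat-flow monotonicity scheme then applies \emph{mutatis mutandis}, with signs tracked carefully.

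The main obstacle I anticipate is the Fisher-type comparison $J_{\alpha}(X+\sqrt{t}Z)\leq J_{\alpha}(T(X)+\sqrt{t}Z)$ for non-integer $\alpha$, where the symmetric pairwise-distance representation from the integer case is unavailable. A natural route is through the Stein--Tweedie formula $\nabla\log f_{X+\sqrt{t}Z}(y)=-\tfrac{1}{t}(y-\E[X\mid X+\sqrt{t}Z=y])$, which recasts $J_{\alpha}$ as a functional of a conditional expectation; contracting $X$ to $T(X)$ ought to concentrate the posterior more tightly and thereby increase the magnitude of the score, but making this rigorous uniformly in $\alpha$ is the delicate step.
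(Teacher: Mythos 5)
Your integer-order argument is correct, elegant, and genuinely different from the paper's route. The identity
\[
\int_{\R^{n}}\prod_{i=1}^{k}\gamma_{s}(y-x_{i})\,\dd y=\frac{1}{(2\pi s)^{n(k-1)/2}\,k^{n/2}}\exp\!\left(-\frac{1}{2sk}\sum_{i<j}\|x_{i}-x_{j}\|_{2}^{2}\right)
\]
is correct (complete the square, $\sum_i\|x_i-\bar{x}\|^2=\tfrac{1}{k}\sum_{i<j}\|x_i-x_j\|^2$), Tonelli gives $\|f_{X+\sqrt{s}Z}\|_{k}^{k}=C_{k,s,n}\,\E\exp\bigl(-\tfrac{1}{2sk}\sum_{i<j}\|X_{i}-X_{j}\|_{2}^{2}\bigr)$, and the pointwise monotonicity in pairwise distances immediately settles $\alpha=k\geq 2$ (and hence $\alpha=\infty$ by letting $k\to\infty$). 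This is a nice self-contained proof for those orders, with no moment hypothesis and no machinery. The paper instead lifts $T$ to a \emph{continuously contracting} family in $\R^{2n}$ via the Bezdek--Connelly trajectories $S_t(x,0)$, uses a velocity-field/mass-transport argument to show the pushed-forward Gaussian-convolved measures have nonpositive divergence, concludes a stronger majorisation statement (Theorem \ref{thm: maincont}), and then tensorises; this handles all $\alpha\in[0,\infty]$ simultaneously.

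However, for the remaining orders your plan has a genuine gap, and it is the load-bearing one. The Fisher-type comparison $J_{\alpha}(X+\sqrt{t}Z)\leq J_{\alpha}(T(X)+\sqrt{t}Z)$ is not a technical lemma one can expect to push through with standard estimates: it is precisely the infinitesimal (in $t$) form of the inequality you are trying to prove, so obtaining it ``uniformly in $\alpha$'' is essentially the same difficulty as the theorem itself, just differentiated once. There is no obvious pointwise mechanism forcing the contracted score to be larger; the Stein--Tweedie heuristic you offer (contraction concentrates the posterior, hence increases the score) is not a proof and is not even clearly true in general for an arbitrary $1$-Lipschitz $T$ and arbitrary $X$. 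Moreover, $h_{\alpha}$ is not an affine (nor convex/concave in a usable direction) function of $\alpha$, so you cannot interpolate from the integer orders to reach $\alpha=1$ or $\alpha\in[0,1)$. As written, the proposal proves the theorem for $\alpha\in\{2,3,\ldots\}\cup\{\infty\}$ and leaves the cases $\alpha=1$ (the entropic Kneser--Poulsen theorem, the paper's headline corollary) and all non-integer $\alpha$ unresolved, contingent on an unproved inequality at least as strong as the target.

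A smaller issue: the claimed first-order expansion $F_{\alpha}(t)\to 0^{+}$ as $t\to\infty$ uses $\E\|X-X'\|_2^2$, which is a second-moment quantity; you would need at least $\E\|X\|_2^2<\infty$ (not just $\E\|X\|_2<\infty$) for that expansion to be meaningful, and for $\alpha\geq 1$ the theorem makes no moment assumption at all, so the asymptotic anchoring of your monotonicity argument also needs a separate truncation step that you have not supplied.
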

\begin{rem}
    The above theorem is first proved under the condition $\E \Vert X \Vert_{2} < \infty$ for all $\alpha$. When $\alpha \geq 1$, we eliminate this condition by an approximation argument. For $\alpha \in [0,1)$, the approximation argument we use does not go through. Despite this, we are not aware of any counterexamples in the case $\alpha \in [0, 1)$ when $\E \Vert X \Vert_{2} = \infty$. 
\end{rem}
Note that Theorem \ref{thm: main} completely answers \cite[Question 1]{AishwaryaAlamLiMyroshnychenkoZatarainVera23} for Gaussian noise. Moreover, as explained in \cite{AishwaryaAlamLiMyroshnychenkoZatarainVera23}, a result such as Theorem \ref{thm: main} for the uniform distribution on a ball rather than the Gaussian would immediately yield the Kneser--Poulsen conjecture. 

The above theorem is obtained as a consequence of a stronger result that holds for continuous contractions under mild conditions on the trajectories.
\begin{thm} \label{thm: maincont}
    Suppose $T_{t}$ is a continuously contracting family of maps starting in a set $K \subset \R^n$ (that is, $T_{0}$ is defined on $K$) with smooth trajectories, and $X$ is a $K$-valued random vector with $\E \Vert X \Vert_{2} < \infty$. Then, if $\E \sup_{t} \Vert \frac{\d}{\d t}T_{t}(X) \Vert_{2} < \infty$, the following holds: for any convex function $\phi: [0, \infty) \to \R$ satisfying $\phi (0) = 0$, 
    \begin{equation}
    \int \phi (f_{T_{t}(X) + \sqrt{s}Z}) \d x
    \end{equation}
    is monotonically increasing in $t$.
\end{thm}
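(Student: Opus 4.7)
The plan is to write $F(t) := \int \phi(f_t(y)) \d y$, where $f_t$ abbreviates the density $f_{T_t(X)+\sqrt{s}Z}$, and to express $F'(t)$ as the integral of the product of two factors, each non-positive, so that the sign of $F'(t)$ is transparent. The setup is a continuity equation for $f_t$. Because $\|T_t(x)-T_t(y)\|_2$ is non-increasing, any two trajectories that meet must stay together thereafter, and so $V_t(T_t(x)) := \frac{\d}{\d t}T_t(x)$ unambiguously defines a vector field on $T_t(K)$; the same monotonicity yields the dissipative inequality $(V_t(z)-V_t(z'))\cdot(z-z') \leq 0$ for $z, z' \in T_t(K)$. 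Writing $\mu_t$ for the law of $T_t(X)$, convolving the transport identity $\del_t \mu_t = -\textnormal{div}(V_t\mu_t)$ with the Gaussian $g_s$ produces
\begin{equation}
\del_t f_t(y) = -\textnormal{div}_y\bigl(f_t(y)\,u_t(y)\bigr),\qquad u_t(y) := \E\bigl[V_t(T_t(X))\,\big|\,T_t(X)+\sqrt{s}Z = y\bigr].
\end{equation}
The hypothesis $\E\sup_t\|\tfrac{\d}{\d t}T_t(X)\|_2<\infty$ is what permits differentiation under the integral and controls the resulting integrands; together with $\E\|X\|_2<\infty$, it justifies the formal manipulations below via Gaussian decay of $f_t$.

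Next, I would take $F'(t) = -\int \phi'(f_t)\,\textnormal{div}(f_tu_t)\d y$ and integrate by parts twice, using $\grad\phi(f_t) = \phi'(f_t)\grad f_t$ to avoid introducing $\phi''$ (important because $\phi$ is only assumed convex). The outcome is
\begin{equation}
F'(t) = \int \Psi(f_t(y))\,\textnormal{div}(u_t)(y)\,\d y,\qquad \Psi(r) := \phi(r) - r\phi'(r).
\end{equation}
Since $\Psi(0)=0$ and $\Psi'(r) = -r\phi''(r) \leq 0$ (in the distributional sense), convexity of $\phi$ together with $\phi(0)=0$ forces $\Psi \leq 0$ on $[0,\infty)$, hence $\Psi(f_t) \leq 0$ pointwise. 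The theorem thus reduces to the pointwise claim $\textnormal{div}(u_t)(y) \leq 0$ for every $y$.

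This divergence bound is where I expect the main obstacle. The Tweedie--Stein identity $\grad f_t(y) = -s^{-1}f_t(y)\,\E[y - T_t(X)\mid Y=y]$, with $Y := T_t(X)+\sqrt{s}Z$, lets me rearrange $\textnormal{div}(u_t) = \textnormal{div}(f_tu_t)/f_t - u_t\cdot\grad\log f_t$ into the clean conditional-covariance identity
\begin{equation}
\textnormal{div}(u_t)(y) = \frac{1}{s}\,\textnormal{tr}\,\textnormal{Cov}\bigl(V_t(W),\,W\,\big|\,W+\sqrt{s}Z = y\bigr),\qquad W := T_t(X).
\end{equation}
Because the posterior of $W$ given $Y=y$ is supported on $T_t(K)$, where $V_t$ is dissipative, the doubling trick finishes the job: for conditionally-iid copies $W, W'$ drawn from this posterior,
\begin{equation}
2\,\textnormal{tr}\,\textnormal{Cov}\bigl(V_t(W),W\,\big|\,y\bigr) = \E\bigl[(V_t(W)-V_t(W'))\cdot(W-W')\,\big|\,y\bigr] \leq 0.
\end{equation}
Combined with $\Psi(f_t)\leq 0$, this yields $F'(t)\geq 0$, as required. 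A general convex $\phi$ with $\phi(0)=0$ can then be handled by approximation: the decomposition $\phi(r) = \phi'(0^+) r + \int_0^\infty (r-a)^+ \d\phi''(a)$ reduces the problem to smooth test functions and to the hockey-stick functions $r\mapsto (r-a)^+$, for which the manipulations above are valid directly.
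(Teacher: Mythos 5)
Your proposal is essentially correct and reaches the same key intermediate fact as the paper — namely, that the velocity field $\tilde v_t$ compatible with the smoothed curve $\mu_t\star\gamma_s$ is the conditional expectation $\tilde v_t(x)=\E[v_t(W)\mid W+\sqrt{s}Z=x]$ and that its divergence equals (up to $1/s$) a conditional covariance trace that is non-positive by dissipativity of $v_t$. But there are two genuine points of divergence worth noting, one in your favor and one where you are skimping.

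In your favor: you dispose of the sign of $\textnormal{div}(\tilde v_t)$ via the doubling identity $2\,\tr\,\textnormal{Cov}(v_t(W),W\mid y)=\E[(v_t(W)-v_t(W'))\cdot(W-W')\mid y]$ with $W,W'$ conditionally i.i.d. The paper instead writes the covariance as $\E\langle v_t(W)-\E v_t(W),\,W-\E W\rangle$, observes that $\E v_t(W)$ can be swapped for $v_t(\E W)$, and then has to invoke Minty's extension theorem to make sense of $v_t(\E W)$ because $\E W$ need not lie in the support $T_t(K)$. Your doubling trick keeps both sample points in $T_t(K)$, where the monotone inequality is already available from Proposition \ref{prop: velocityfieldforcontcontrac}, so the whole Minty digression (Theorem \ref{thm: mintylater} and the surrounding discussion) becomes unnecessary. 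That is a real simplification.

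Where you are thinner than the paper: to pass from $\textnormal{div}(\tilde v_t)\leq 0$ to monotonicity of $\int\phi(f_t)$, you differentiate $F(t)=\int\phi(f_t)$ in $t$, integrate by parts twice, and land on $F'(t)=\int \Psi(f_t)\,\textnormal{div}(\tilde v_t)$ with $\Psi(r)=\phi(r)-r\phi'(r)\leq 0$. This is clean when $\phi$ is $C^1$, $F(t)$ is finite and differentiable, and the boundary terms vanish — but none of that is automatic for an arbitrary convex $\phi$ with $\phi(0)=0$. Your proposed remedy via the decomposition $\phi(r)=\phi'(0^+)\,r+\int_0^\infty (r-a)^+\,\dd\phi''(a)$ does not directly cover the case $\phi'(0^+)=-\infty$ (e.g.\ $\phi(r)=-\sqrt r$, which is convex with $\phi(0)=0$), and the hockey-stick functions themselves are not $C^1$, so a bit more care (truncation in $a$, mollification, a Fatou argument) is needed to close this. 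The paper sidesteps all of this: its Proposition \ref{prop: divandent} reconstructs flow maps $T_t$ from the locally Lipschitz velocity field, observes that $\textnormal{div}\,\tilde v_t\leq 0$ forces $|\det\grad T_t|\leq 1$, and then concludes by a majorisation comparison (pushing forward compact sets while not increasing volume), which handles general convex $\phi$ with $\phi(0)=0$ — including possibly infinite $\int\phi$ — with no differentiation in $t$ and no integration by parts. So the two arguments are interchangeable at the level of the essential geometry, but the paper's majorisation route is more robust to the low regularity allowed in the hypotheses, while your route, once patched, is more transparent and avoids Minty.
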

\begin{rem}
    Continuous contractions induce totally-ordered curves with respect to a majorisation order \cite[Lemma 1.5]{MelbourneRoberto23}. The second part of our result says that such curves continue to be totally-ordered under the action of Gaussian convolution. 
\end{rem}

Continuous contractions appear quite naturally in probability theory. The (reverse) Ornstein--Uhlenbeck flow on strongly log-concave measures \cite{KimMilman12}, the (reverse) heat flow for log-concave measures \cite{KlartagPutterman23}, and the maps that induce displacement interpolation (for optimal transport with quadratic cost) when the final transport map is $1$-Lipschitz--- are all examples of continuous contractions. The integrability condition $\E \sup_{t} \Vert \frac{\d}{\d t}T_{t}(X) \Vert_{2} < \infty$ is also fairly mild. For example, in the setup of the optimal transport problem with quadratic cost, it is always satisfied.

Of course, the information-theoretic analogue of the Kneser--Poulsen conjecture is the special case $\alpha=1$ of Theorem \ref{thm: main}.
\begin{thm}[The entropic Kneser--Poulsen theorem] \label{thm: entropickp}
    For every $1$-Lipschitz map $T: \R^n \to \R^n$ and random vector $X$, we have $h(X + \sqrt{s}Z) \geq h(T(X) + \sqrt{s}Z)$.
\end{thm}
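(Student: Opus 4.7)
The plan is to derive Theorem \ref{thm: entropickp} from Theorem \ref{thm: maincont} via an Alexander-style embedding trick (this is the content of the case $\alpha=1$ of Theorem \ref{thm: main}). The principal obstruction is that for a general $1$-Lipschitz $T:\R^n\to\R^n$, no obvious continuously contracting family in $\R^n$ interpolates between the identity and $T$: linear interpolation $x\mapsto(1-t)x+tT(x)$ is not a continuous contraction, as one sees already for $T(x)=-x$, where pairwise distances shrink to zero at $t=1/2$ and then grow back.

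To circumvent this, I would lift the problem to $\R^{2n}$ and define $\tilde T_t:\R^n\to\R^{2n}$ for $t\in[0,1]$ by
\begin{equation}
    \tilde T_t(x) = \bigl(\cos(\pi t/2)\,x,\ \sin(\pi t/2)\,T(x)\bigr),
\end{equation}
so that $\tilde T_0(x)=(x,0)$, $\tilde T_1(x)=(0,T(x))$, and
\begin{equation}
    \|\tilde T_t(x)-\tilde T_t(y)\|_2^2 = \cos^2(\pi t/2)\|x-y\|_2^2 + \sin^2(\pi t/2)\|T(x)-T(y)\|_2^2,
\end{equation}
which is monotonically decreasing in $t$ since $\|T(x)-T(y)\|_2 \leq \|x-y\|_2$. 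The trajectories are smooth, so $\tilde T_t$ is a continuously contracting family of maps from the slice $\R^n\times\{0\}\subseteq\R^{2n}$ into $\R^{2n}$ in the sense of Definition \ref{def: continuous contrac}.

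Assume first that $\E\|X\|_2<\infty$. From $\|T(x)\|_2\leq\|x\|_2+\|T(0)\|_2$ and
\begin{equation}
    \sup_{t\in[0,1]}\bigl\|\tfrac{\dd}{\dd t}\tilde T_t(x)\bigr\|_2 \leq \tfrac{\pi}{2}\bigl(\|x\|_2+\|T(x)\|_2\bigr),
\end{equation}
the integrability hypothesis of Theorem \ref{thm: maincont} is satisfied. Apply Theorem \ref{thm: maincont} in $\R^{2n}$ with the convex function $\phi(u)=u\log u$ (extended by $\phi(0)=0$); the resulting functional $\int\phi(f_{\tilde T_t(X)+\sqrt{s}\,Z})\,\dd x$ equals $-h(\tilde T_t(X)+\sqrt{s}\,Z)$, so this entropy is monotonically non-increasing in $t$. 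Writing the $\R^{2n}$-Gaussian as $Z=(Z',Z'')$ with $Z',Z''$ independent standard $\R^n$-Gaussians and using the additivity $h(U,V)=h(U)+h(V)$ for independent random vectors, evaluating at $t=0$ and $t=1$ gives
\begin{equation}
    h(X+\sqrt{s}\,Z') + h(\sqrt{s}\,Z'') \geq h(\sqrt{s}\,Z') + h(T(X)+\sqrt{s}\,Z''),
\end{equation}
and cancelling the common Gaussian term yields the theorem.

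The remaining step, which I expect to be the main technical obstacle, is removing the moment hypothesis $\E\|X\|_2<\infty$. The natural strategy is truncation: let $X_R$ denote $X$ conditioned on $\{\|X\|_2\leq R\}$, apply the inequality just proven to $X_R$, and pass to $R\to\infty$. The densities $f_{X_R+\sqrt{s}\,Z}$ and $f_{T(X_R)+\sqrt{s}\,Z}$ are smoothed by Gaussian convolution, so they converge pointwise to the limits and obey a uniform upper bound $\|\gamma_s\|_\infty$; dominated convergence then controls the super-level set $\{f\geq 1\}$ of the integrand $f\log f$, while a Gaussian tail estimate on the smoothed densities should control the sub-level set $\{f<1\}$ on which $f\log f$ is negative. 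Taking the limit on both sides of the already-proven inequality completes the proof.
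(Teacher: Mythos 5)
Your overall strategy is the same as the paper's: lift to $\R^{2n}$, build a continuously contracting family interpolating $(x,0)\mapsto(\text{image of }x)$, verify the integrability hypothesis, apply Theorem~\ref{thm: maincont} with $\phi(u)=u\log u$, and use additivity of entropy to cancel the extra Gaussian factor. Your trajectory
\[
\tilde T_t(x) = \bigl(\cos(\pi t/2)\,x,\ \sin(\pi t/2)\,T(x)\bigr)
\]
is not the one the paper uses --- the paper takes the Bezdek--Connelly lift
$S_t(x,0) = \left( \frac{x + T(x)}{2} + (\cos \pi t) \frac{x - T(x)}{2} , (\sin \pi t) \frac{x - T(x)}{2} \right)$,
which moves $(x,0)$ to $(T(x),0)$ in $\R^{2n}$ --- but yours is equally valid: the pairwise-distance computation you display shows it is continuously contracting, and the velocity bound $\|\tfrac{\dd}{\dd t}\tilde T_t(x)\|_2 \leq \tfrac{\pi}{2}(\|x\|_2 + \|T(x)\|_2)$ checks the hypothesis of Theorem~\ref{thm: maincont} under $\E\|X\|_2<\infty$. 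Up to the moment assumption, your argument is correct and arguably a touch cleaner than the paper's.

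The genuine gap is in removing $\E\|X\|_2<\infty$, and it is not a minor technicality. You want $\lim_R h(X_R+\sqrt{s}Z)=h(X+\sqrt{s}Z)$ (and likewise for $T(X_R)$), but the ``free'' direction from Fatou is the \emph{wrong} one. Reducing to $s=1$ so $f_R := f_{X_R+Z}\leq 1$, the integrand $f_R\log f_R$ is nonpositive and converges pointwise to $f\log f$, so Fatou applied to $-f_R\log f_R\geq 0$ gives
\[
\liminf_R\ h(X_R+Z) \;\geq\; h(X+Z),
\]
i.e.\ \emph{lower} semicontinuity of entropy along the truncation. This is useless for passing the inequality $h(X_R+Z)\geq h(T(X_R)+Z)$ to the limit: that passage requires $\limsup_R h(X_R+Z)\leq h(X+Z)$, i.e.\ \emph{upper} semicontinuity. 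Your proposed ``Gaussian tail estimate'' on the sub-level set is exactly where this would have to come from, but it does not follow from what you have written: although each $f_R$ individually has Gaussian tails, the constants degrade as $R\to\infty$ (the support of $X_R$ grows), so there is no uniform tail bound and no obvious uniform integrability of $(-f_R\log f_R)$. The paper avoids the problem entirely by first proving the statement for $\alpha>1$, where the truncation limit is genuinely controlled by a generalized dominated convergence argument (using $f_R^\alpha\leq f_R$ and $f_R\to f$ in $L^1$), and then deducing $\alpha=1$ by sending $\alpha\to1^+$ via the continuity result \cite[Lemma~5.1~(ii)]{MadimanWang14}, justified by the uniform lower bound $h_\alpha(X+Z)\geq h_\alpha(Z)>-\infty$. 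If you want to keep your direct truncation route, you must actually supply the upper-semicontinuity estimate, not just assert it; as stated it is a gap.
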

Perhaps from an adjacent viewpoint, the Kneser--Poulsen theorem for information transmission would be the following statement.
\begin{cor} \label{cor: kpacity}
    Suppose Alice wants to communicate with Bob using the alphabet $K = \{ x_{1} , \cdots , x_{k} \}$ over a noisy channel with additive white Gaussian noise: Bob receives the random point $x + Z$ when Alice sends $x$. The optimal rate at which information can be reliably transmitted across this noisy channel cannot be improved by bringing points in $K$ pairwise closer.
\end{cor}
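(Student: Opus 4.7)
The plan is to unpack the information-theoretic meaning of ``optimal rate of reliable transmission'' and then reduce the claim to the entropic Kneser--Poulsen theorem (Theorem \ref{thm: entropickp}). By Shannon's channel coding theorem, the optimal rate across the additive white Gaussian noise channel with input alphabet $K$ equals the Shannon capacity
\[
C(K) = \sup_{X \in \mathcal{P}(K)} I(X; X+Z) = \sup_{X} \bigl[ h(X+Z) - h(Z) \bigr],
\]
where the supremum runs over random vectors $X$ supported on $K$ and the second equality uses $h(X+Z \mid X) = h(Z)$. The goal is therefore to show that $C(T[K]) \leq C(K)$ whenever $T : K \to \R^n$ is a contraction that brings the points of $K$ pairwise closer, which by definition is equivalent to the assertion of the corollary (in the Kneser--Poulsen setup the ``closer'' alphabet $\{y_{1}, \ldots , y_{k}\}$ is precisely the image of $K$ under the contraction $x_{i} \mapsto y_{i}$).

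I would first invoke Kirszbraun's extension theorem to extend $T$ to a $1$-Lipschitz map $\widetilde T : \R^n \to \R^n$, bringing us into the setting of Theorem \ref{thm: entropickp}. Applying that theorem with $s = 1$ (to match the unit-variance noise of the problem) yields
\[
h(X+Z) \geq h(\widetilde T(X) + Z) = h(T(X) + Z)
\]
for every random vector $X$ supported on $K$. Since every probability measure on $T[K]$ arises as a pushforward $T_{*}\mu$ of some probability measure $\mu$ on $K$ (each atom may be lifted to an arbitrary preimage), taking suprema on both sides gives
\[
C(T[K]) = \sup_{X' \in \mathcal{P}(T[K])} \bigl[ h(X'+Z) - h(Z) \bigr] \leq \sup_{X \in \mathcal{P}(K)} \bigl[ h(T(X)+Z) - h(Z) \bigr] \leq C(K),
\]
which is the desired inequality. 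Once Theorem \ref{thm: entropickp} is in hand there is essentially no obstacle here: the corollary is purely a matter of translating ``reliable transmission rate'' into $\sup I(X;X+Z)$, extending $T$ to all of $\R^n$, and unwinding the suprema.
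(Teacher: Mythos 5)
Your proof is correct and follows essentially the same route as the paper: reduce the corollary to the pointwise inequality $I(X;X+Z)\geq I(T(X);T(X)+Z)$ via Shannon's capacity formula, and deduce that from Theorem \ref{thm: entropickp}. You are somewhat more explicit than the paper in spelling out the Kirszbraun extension of $T$ to all of $\R^n$ (needed to literally apply the theorem as stated) and in noting that every distribution on $T[K]$ is a pushforward of one on $K$ before taking suprema; the paper leaves both of these routine steps implicit, summarizing the conclusion as holding ``in a pointwise sense.''
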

In information theory, this optimal rate is called the channel capacity $\mathcal{C}$ (see \cite{Kemperman69} for a quick mathematical introduction). Shannon, in his landmark work \cite{Shannon48}, showed that this quantity has a clean mathematical expression which translates to the following in the setting of Corollary \ref{cor: kpacity}:
\begin{equation} \label{eq: capacityformula}
\mathcal{C} = \sup_{X} I (X; X + Z),
\end{equation}
where the supremum is over all random vectors taking values in $K$, and the mutual information $I(X; X + Z) = h(X+Z) - h(X+Z \vert X) = h(X+Z) - h(Z)$ measures the amount of information shared between the ``input''$X$ and the ``output'' $X + Z$. Theorem \ref{thm: entropickp} shows $I (X; X + Z) \geq I(T(X); T(X)+Z)$ thereby proving Corollary \ref{cor: kpacity} in a pointwise-sense. We suspect that Corollary \ref{cor: kpacity} may hold in more generality, for every radially-symmetric log-concave noise $W$ instead of the Gaussian $Z$. It is possible to approach the Kneser--Poulsen conjecture directly based on R\'enyi-generalisations of channel capacity, but this will be pursued elsewhere. 

The Entropy Power inequality \eqref{eq: EPIaddcombform} can be strengthened if one of the summands is Gaussian. This is a deep result of information theory discovered by Costa \cite{Costa85}. We state it in terms of the entropy power $N(\cdot) = e^{\frac{2 h(\cdot)}{n}}$ below. 
\begin{thm} \cite{Costa85} \label{thm: costaEPI} 
    For any $\R^n$-valued random vector $X$ such that $N(X)$ exists, $N(X + \sqrt{s}Z)$ is a concave function of $s$. 
\end{thm}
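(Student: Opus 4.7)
My approach is the classical route via the de Bruijn identity together with a ``super Cauchy--Schwarz'' bound on the decay of Fisher information along the heat semigroup. Write $f_s$ for the density of $X_s := X + \sqrt{s}Z$, which for $s>0$ is smooth, bounded, and has Gaussian-tail decay of all derivatives, so that every integration by parts below is legal. De Bruijn's identity combined with $N(X_s) = \exp(2h(X_s)/n)$ and the chain rule yields
\begin{equation}
N'(X_s) = \tfrac{1}{n}\,N(X_s)\,J(X_s), \qquad
N''(X_s) = \tfrac{N(X_s)}{n}\Bigl(\tfrac{J(X_s)^{2}}{n}+J'(X_s)\Bigr),
\end{equation}
where $J(X_s) = \int |\nabla f_s|^{2}/f_s\,\d x$ is the Fisher information. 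Thus the concavity of $s\mapsto N(X_s)$ is equivalent to the differential inequality
\begin{equation}
-J'(X_s)\;\geq\;\tfrac{J(X_s)^{2}}{n}.
\end{equation}

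Next, differentiating $J(X_s)$ in $s$ via the heat equation $\partial_s f_s = \tfrac{1}{2}\Delta f_s$, applying Bochner's identity to $\Delta|\nabla\log f_s|^{2}$, and twice integrating by parts produces the manifestly nonnegative identity
\begin{equation}
-J'(X_s)\;=\;\int f_s\,\|\nabla^{2}\log f_s\|_{HS}^{2}\,\d x,
\end{equation}
where $\|\cdot\|_{HS}$ denotes the Hilbert--Schmidt (Frobenius) norm of the Hessian. The required inequality is then closed in two strokes. First, integration by parts gives $J(X_s) = -\int f_s\,\Delta\log f_s\,\d x$, and scalar Cauchy--Schwarz under the probability measure $f_s\,\d x$ yields
\begin{equation}
J(X_s)^{2}\;\leq\;\int f_s\,(\Delta\log f_s)^{2}\,\d x.
\end{equation}
Second, the elementary trace--Frobenius inequality $(\tr A)^{2}\leq n\,\|A\|_{HS}^{2}$, applied pointwise to the Hessian $A=\nabla^{2}\log f_s$, upgrades this to
\begin{equation}
J(X_s)^{2}\;\leq\;n\int f_s\,\|\nabla^{2}\log f_s\|_{HS}^{2}\,\d x\;=\;-n\,J'(X_s),
\end{equation}
which is precisely the required estimate. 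The standard Gaussian $Z$ sits at equality throughout the chain, a good sanity check.

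The main obstacle is one of rigour rather than ingenuity: justifying the differentiation under the integral sign used to compute $J'(X_s)$, and the vanishing of every boundary term in the integrations by parts. Both follow from standard Gaussian heat-kernel estimates for $s>0$, but the cleanest path is to establish concavity first for a convenient dense class of inputs (say, compactly supported $X$), where all moments are finite and $f_s$ together with its derivatives admits uniform Gaussian upper bounds, and then to extend to a general $X$ by truncation combined with the lower semicontinuity of entropy under weak convergence. A speculative alternative worth checking, perhaps the ``unifying inequality'' alluded to in the abstract, is to combine Theorem~\ref{thm: maincont} with the scaling identity $h(\lambda X+\sqrt{s}Z)=n\log\lambda+h(X+\sqrt{s/\lambda^{2}}Z)$: applied to the dilations $T_\lambda=\lambda\,\textnormal{Id}$ it immediately recovers the Gaussian Fisher bound $J(X_s)\leq n/s$, and a suitable refinement along a more intricate contracting family may give Costa's estimate as well.
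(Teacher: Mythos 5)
The paper does not supply a proof of this statement: it is cited directly from Costa \cite{Costa85} and used as a black box in deducing Theorem \ref{thm: entropickp+costa}. There is therefore no in-paper argument to compare against, but your proposal is a correct sketch of Villani's well-known short proof of the concavity of entropy power, which is the cleanest modern route to Costa's theorem. The reduction of concavity to $-J'(X_s)\geq J(X_s)^2/n$ via de Bruijn's identity is right; the identity $-J'(X_s)=\int f_s\,\Vert\nabla^2\log f_s\Vert^2_{HS}\,\d x$ (with the normalisation $\partial_s f_s=\tfrac12\Delta f_s$) is precisely Villani's lemma; and the closing chain of $J(X_s)=-\int f_s\,\Delta\log f_s\,\d x$, Cauchy--Schwarz in $L^2(f_s\,\d x)$, and the pointwise bound $(\tr A)^2\leq n\Vert A\Vert^2_{HS}$ is exactly how Villani concludes. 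Costa's original 1985 argument was considerably more computational and did not isolate the Hessian-of-log-density quantity so cleanly, so your route is in fact the streamlined one. The analytic justifications you flag (differentiation under the integral, decay of boundary terms) are genuine but standard for $s>0$ thanks to Gaussian smoothing, and the approximate-then-pass-to-the-limit strategy you outline is the usual remedy.

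One caution about your speculative final paragraph: the paper deduces Theorem \ref{thm: entropickp+costa} \emph{from} Costa's theorem, so attempting to derive Costa's theorem from the paper's contraction machinery would be circular in that direction. Your observation that applying Theorem \ref{thm: main} to dilations $T_\lambda=\lambda\,\textnormal{Id}$ and linearising in $\lambda$ recovers the Gaussian Fisher bound $J(X_s)\leq n/s$ is correct and pleasant, but that is a first-order (Stam-type) bound, strictly weaker than the second-order differential inequality $-J'\geq J^2/n$ that concavity demands; there is a real gap between what the contraction argument gives for free and the full strength of Costa's theorem.
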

Simple algebraic manipulations (see, for example, \cite{Costa85} or \cite[Section 2.2.2]{MadimanMelbourneXu17}) show that the concavity of $N(X + \sqrt{s}Z)$ in $s$ is equivalent to the assertion that $A(\beta) = N(\beta X + Z ) - N (\beta X)$ is increasing in $\beta \in [0,1]$. The inequality $A(1) \geq A(0)$ says $N(X+Z) \geq N(X) + N(Z)$, which is exactly the Entropy Power inequality when $Y=Z$ in \eqref{eq: EPIaddcombform} since $N(\cdot)$ is additive over the class of scaled standard Gaussians. In this setting, Theorem \ref{thm: entropickp} and Theorem \ref{thm: costaEPI} admit the following unification.  
\begin{thm} \label{thm: entropickp+costa}
For every Lipschitz map $T: \R^n \to \R^n$ with Lipschitz constant $\lip(T) \leq 1$, and every random vector $X$ for which $N(X)$ exists, 
\begin{equation} \label{eq: entropickp+costa}
    N(X + Z) \geq N(T(X) + Z) + (1- \lip^{2} (T)) N(X).
\end{equation}
\end{thm}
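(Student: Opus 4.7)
Write $\lambda = \lip(T) \in [0,1]$. The plan is to reduce to the $1$-Lipschitz case by a simple rescaling and then combine Theorem \ref{thm: entropickp} with Costa's concavity (Theorem \ref{thm: costaEPI}) in the equivalent monotonicity form recalled in the paragraph right before the statement of Theorem \ref{thm: entropickp+costa}.

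The degenerate case $\lambda = 0$ is handled at once: then $T$ is constant, so $T(X)+Z$ is a translate of $Z$, and the claim reduces to $N(X+Z) \geq N(Z) + N(X)$, which is the classical Gaussian case of the Entropy Power inequality \eqref{eq: EPIaddcombform}. From here on I assume $\lambda \in (0,1]$.

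I would define $S : \R^n \to \R^n$ by $S(y) = T(y/\lambda)$. Then $S$ is $1$-Lipschitz and satisfies $S(\lambda X) = T(X)$, so Theorem \ref{thm: entropickp} applied to $S$ and the random vector $\lambda X$ yields
\begin{equation}
h(\lambda X + Z) \geq h(S(\lambda X) + Z) = h(T(X) + Z),
\end{equation}
and hence $N(\lambda X + Z) \geq N(T(X) + Z)$. On the other hand, the monotonicity of $A(\beta) = N(\beta X + Z) - N(\beta X)$ on $[0,1]$ gives $A(1) \geq A(\lambda)$, which together with the scaling identity $N(\lambda X) = \lambda^{2} N(X)$ rearranges to
\begin{equation}
N(X+Z) \geq N(\lambda X + Z) + (1-\lambda^{2}) N(X).
\end{equation}
Chaining the two inequalities produces the desired bound $N(X+Z) \geq N(T(X)+Z) + (1-\lambda^{2}) N(X)$.

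The argument draws on two substantial inputs, the entropic Kneser--Poulsen theorem and Costa's concavity, and the only new observation is the rescaling $S(y) = T(y/\lambda)$ that turns a general $\lambda$-Lipschitz map into a $1$-Lipschitz one. I therefore do not foresee a serious obstacle; the most delicate point is only that $\lambda = 0$ must be treated separately, as above, since the rescaling trick breaks down there.
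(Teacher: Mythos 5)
Your proof is correct and takes essentially the same approach as the paper: in both cases the key intermediate inequality is $N(\lambda X + Z) \geq N(T(X)+Z)$ (obtained from Theorem \ref{thm: entropickp} via a rescaling argument), which is then chained with Costa's monotonicity $A(1) \geq A(\lambda)$. The paper applies Theorem \ref{thm: entropickp} to the map $T/\lambda$ and $X$ with noise scale $s = 1/\lambda^{2}$, while you apply it to $S(y) = T(y/\lambda)$ and $\lambda X$ with $s = 1$; these are the same computation written in dual form. Your separate treatment of $\lambda = 0$ is a small but genuine improvement in rigour, since the paper's explicit division by $\lambda$ is undefined in that degenerate case.
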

On one hand, applying Theorem \ref{thm: entropickp+costa} to maps $T: x \mapsto \beta x$, $\beta \in [0,1]$, yields Theorem \ref{thm: costaEPI}. On the other hand, if $\lip (T) \leq 1$ we clearly obtain the conclusion of Theorem \ref{thm: entropickp}. Theorem \ref{thm: entropickp+costa} was anticipated in \cite[Question 2]{AishwaryaAlamLiMyroshnychenkoZatarainVera23}, where the special case of linear $T$ was proven. 
\begin{rem}
    The geometric counterpart of Costa's result, namely the concavity of $\vol (K + r \ball)^{\frac{1}{n}}$ in $r$, does not hold for every compact $K \subseteq \R^n$ if $n > 1$ \cite{FradeliziMarsiglietti14}. However, as shown in \cite{CostaCover84}, it follows from the Brunn--Minkowski inequality \eqref{eq: BMaddcombform} that $\vol (K + r \ball)^{\frac{1}{n}}$ is indeed concave in $r$ if $K$ is convex. Therefore, using a similar argument as employed by us to prove Theorem \ref{thm: entropickp+costa}, the following inequality is implied by the Kneser--Poulsen conjecture if $K$ is assumed to be convex:
    \[
\vol(K + \ball)^{\frac{1}{n}} \geq \vol(T[K] + \ball)^{\frac{1}{n}} + (1 - \lip (T) )\vol (K)^{\frac{1}{n}}.
    \]
\end{rem}

\subsection{Plan of proof of the main results} We first prove Theorem \ref{thm: maincont} using a mass-transport argument. Given a curve $\{\mu_{t}\}_{t \in [0,1]}$ of probability measures and a velocity-field $v_{t}$ compatible with it, in Proposition \ref{prop: velocityforconvolution} we obtain a formula for a velocity-field $\tilde{v}_{t}$ compatible with the noise-perturbed curve $\{ \mu_{t} \star \nu \}_{t \in [0,1]}$, where $\nu$ is any measure with a bounded Lipschitz smooth density. In Section \ref{sec: proofmain} we show, under the assumption $\nu = \gamma$ is the standard Gaussian measure, that if $T_{t}$ is continuously contracting, $\mu_{0}$ any probability measure, $\mu_{t} = {T_{t}}_{\#}\mu_{0}$, then $\grad \cdot \tilde{v}_{t} \leq 0$. Proposition \ref{prop: divandent} allows us to deduce Theorem \ref{thm: maincont} from this divergence condition. Theorem \ref{thm: main} elegantly follows from Theorem \ref{thm: maincont}, using the same old trick of extending the phase space. Only this time, thanks to the tensorisation properties of both the Gaussian measure and R\'enyi entropies, the argument goes through in all dimensions effortlessly. Of course, Theorem \ref{thm: entropickp} is contained in Theorem \ref{thm: main}. A short proof of Theorem \ref{thm: entropickp+costa} is included in Section \ref{sec: proofmain}.

\subsection{Acknowledgements}
We express our heartfelt gratitude to Irfan Alam, Serhii Myroshnychenko, and Oscar Zatarain-Vera, for many enriching discussions around the Kneser--Poulsen theme. We are indebted to Boaz Slomka for introducing us to the Kneser--Poulsen conjecture. The authors would like to thank two anonymous referees for their feedback, which helped us improve the presentation as well as technical aspects of this paper.
 
\section{Preparation: curves of probability measures, perturbations by convolutions} \label{sec: prep}
Let $\{ \mu_{t} \}_{t \in [0,1]}$ be a curve of probability measures in $\R^n$. We say that a time-dependent velocity-field $\{ v_{t} \}_{t \in [0,1]}$ is compatible with $\{ \mu_{t} \}_{t \in [0,1]}$ if the continuity equation
\begin{equation} \label{eq: conteq}
\partial_{t} \mu_{t} + \grad \cdot (v_{t} \mu_{t}) = 0
\end{equation}
is satisfied in the weak sense. The latter equation means that 
\begin{equation} \label{eq: conteqweak}
 \frac{\d }{\d t} \int f \d \mu_{t} = \int \langle \grad f , v_{t} \rangle \d \mu_{t},
\end{equation}
for all test functions $f$. Throughout the paper, by a test function we mean a compactly supported smooth function defined on $\R^n$. Once Equation \eqref{eq: conteq} is known to hold, Equation \eqref{eq: conteqweak} holds under wider generality; for example, it holds for all bounded Lipschitz smooth functions (see \cite[Chapter 8]{AmbrosioGigliSavare08}). 

\begin{exmpl} \label{ex: main}
Curves of probability measures and compatible velocity-fields arise naturally in the following manner. Suppose a one-parameter family of maps $T_{t}$ is given, with the property that the trajectory $T_{t}(x)$ of each point $x$ is smooth and $v_{t}$ is well-defined by the equation $\frac{\d}{\d t} T_{t}(x) = v_{t} (T_{t}(x))$. Fix a probability measure $\mu_{0}$, define $\mu_{t} = {T_{t}}_{\#} \mu_{0}$. Assume that the $\sup_{t} \Vert v_{t} \circ T_{t} \Vert_{2} \in L^{1}(\mu_{0})$. This allows for the computation below to hold for any test function $f$: 
\begin{equation}
\begin{split}
&\frac{\d}{\d t} \int f (x) \d \mu_{t} (x) = \frac{\d}{\d t} \int f (T_{t}(x)) \d \mu_{0}(x) = \int \frac{\d}{\d t} f(T_{t}(x)) \d \mu_{0}(x) \\
&= \int \Big\langle \grad f(T_{t}(x)) , \frac{\d}{\d t}T_{t}(x) \Big\rangle \d \mu_{0}(x) =  \int \Big\langle \grad f(T_{t}(x)) , v_{t}(T_{t}(x) )\Big\rangle \d \mu_{0}(x) \\
&= \int \langle \grad f (x), v_{t} (x) \rangle \d \mu_{t} (x),
\end{split}
\end{equation}
where the assumption on $\Vert v_{t} \circ T_{t} \Vert_{2}$ is used to justify differentiation under the integral sign.
Thus, the curve of probability measures $\mu_{t}$ and the time-dependent velocity-field $v_{t}$ in this setup are compatible.  
\end{exmpl}
\begin{prop} \label{prop: velocityfieldforcontcontrac}
    A continuously contracting family of maps $\{T_{t} \}_{t \in [0,1]}$ with smooth trajectories has a well-defined velocity-field $v_{t}$. Moreover, $v_{t}$ has the monotonicity property $\langle v_{t}(x) - v_{t} (y) , x - y \rangle \leq 0$ on the support of $v_{t}$.
\end{prop}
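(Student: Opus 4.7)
The proposition has two parts: well-definedness of $v_t$ and the monotonicity inequality. Both will follow from the defining monotonicity of $\|T_s(a)-T_s(b)\|_2$ in $s$, combined with the assumed smoothness of trajectories.

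\textbf{Well-definedness.} The candidate velocity field is $v_t(T_t(a)) := \frac{\d}{\d t}T_t(a)$. The only potential ambiguity arises when two trajectories collide at time $t$: suppose $T_t(a)=T_t(b)=z$ for some $a\neq b$. I would argue that the two right-derivatives at $t$ must agree, so $v_t(z)$ is unambiguous. Since $\|T_s(a)-T_s(b)\|_2$ is monotonically decreasing in $s$ and vanishes at $s=t$, it vanishes for every $s\in[t,1]$; equivalently, $T_s(a)=T_s(b)$ for all $s\geq t$. Hence the right-derivatives at $t$ of the two smooth curves $s\mapsto T_s(a)$ and $s\mapsto T_s(b)$ coincide, and by smoothness (the two-sided derivative equals each one-sided derivative) the full derivatives at $t$ agree. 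So $v_t(z)$ is well-defined on the support of $\mu_t$, i.e.\ on $T_t[K]$.

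\textbf{Monotonicity.} For arbitrary $x=T_t(a),\ y=T_t(b)$ in the support of $v_t$, consider the smooth scalar function
\begin{equation}
g(s) := \|T_s(a)-T_s(b)\|_2^2.
\end{equation}
By assumption $\|T_s(a)-T_s(b)\|_2$ is monotonically decreasing in $s$, and so is its square $g$. Differentiating at $s=t$ and using $\frac{\d}{\d s}T_s(\cdot)=v_s(T_s(\cdot))$ yields
\begin{equation}
g'(t)=2\bigl\langle T_t(a)-T_t(b),\ v_t(T_t(a))-v_t(T_t(b))\bigr\rangle = 2\langle x-y,\ v_t(x)-v_t(y)\rangle.
\end{equation}
Monotonic non-increase of $g$ gives $g'(t)\leq 0$, which is exactly $\langle v_t(x)-v_t(y),x-y\rangle\leq 0$.

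\textbf{Main point of care.} There is no deep obstacle here; the only subtle step is the well-definedness argument, where one must carefully combine the (global in $s$) distance-monotonicity of $\{T_s\}$ with the (pointwise in $t$) two-sided differentiability afforded by smooth trajectories, in order to conclude that coalescing trajectories must share a tangent vector at the moment of collision. Once this is in place, the monotonicity of $v_t$ is an immediate consequence of differentiating the defining inequality of a continuously contracting family.
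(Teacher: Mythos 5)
Your proof is correct and takes essentially the same approach as the paper: well-definedness follows because coalescing trajectories must stay coalesced (by nonnegativity plus monotone decrease of their distance), forcing their derivatives to agree at the moment of collision; and monotonicity follows by differentiating $\|T_s(a)-T_s(b)\|_2^2$ at $s=t$ and using that this is non-increasing. You spell out the one-sided-vs.-two-sided derivative point a bit more explicitly than the paper, but the argument is the same.
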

\begin{proof}
The existence of a well-defined velocity field follows simply from the fact that if two trajectories $T_{t}(x)$ and $T_{t}(y)$ meet at time $t_{0} \in (0,1)$, then one must necessarily have $T_{t}(x) = T_{t}(y)$ for all $t \geq t_{0}$ because the $T_{t}$ are continuously contracting. 

For the monotonicity property, let $x_{0}$ and $y_{0}$ be such that $T_{t}(x_{0}) = x$ and $T_{t}(y_{0}) = y$, then 
\begin{equation}
\begin{split}
0 \geq \frac{\d }{\d t} \Vert T_{t}(x_{0}) - T_{t}(y_{0}) \Vert_{2}^{2} &= 2 \langle v_{t}(T_{t}(x_{0})) - v_{t}(T_{t}(x_{0})) , T_{t}(x_{0}) - T_{t}(y_{0}) \rangle \\
&= 2 \langle v_{t}(x) - v_{t} (y) , x - y \rangle . \\
\end{split}
\end{equation}
\end{proof} 
In the terminology of \cite{Minty61}, a set $E \subseteq \R^n \times \R^n$ is called totally-M-related if $\langle x_{1} - x_{2} , y_{1} - y_{2} \rangle \geq 0$ for all $(x_{1}, y_{1}) , (x_{2}, y_{2}) \in E$. Let us call the set $\{ x \in \R^n \, : \, \textnormal{there exists }y \textnormal{ such that }(x,y) \in E \}$ the domain of $E$, denoted $\dom (E)$. The main result of \cite{Minty61} says: 
\begin{thm} \cite{Minty61} \label{thm: mintylater}
    Let $E \subseteq \R^n \times \R^n$ be a maximal totally-M-related set. Then $\dom (E)$ contains the relative interior of $\conv ( \dom (E) )$. 
\end{thm}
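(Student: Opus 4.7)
The plan is to turn maximality into a fixed-point-style compactness statement. Let $x_0$ be in the relative interior of $\conv(\dom(E))$; after restricting to the affine hull of $\dom(E)$ I may and do assume that this interior is computed with respect to $\R^n$ itself, so $x_0$ is an honest interior point. Maximality lets me invert the problem: it suffices to exhibit a single $y_0 \in \R^n$ such that $(x_0, y_0)$ preserves the monotonicity condition $\langle x_0 - x, y_0 - y \rangle \geq 0$ against every $(x,y) \in E$. Indeed, $E \cup \{(x_0, y_0)\}$ would then still be totally-M-related, and maximality forces $(x_0, y_0) \in E$, giving $x_0 \in \dom(E)$ as required.

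To manufacture $y_0$ I would use the resolvent machinery that is standard in this context. Using the (classical, separately verified) surjectivity of $I + \lambda E$ onto $\R^n$ for every $\lambda > 0$ when $E$ is maximal totally-M-related, the map $J_\lambda := (I + \lambda E)^{-1}$ is well-defined, single-valued, and nonexpansive on $\R^n$. Setting $y_\lambda := \lambda^{-1}(x_0 - J_\lambda(x_0))$, one obtains $(J_\lambda(x_0), y_\lambda) \in E$ for every $\lambda > 0$. Provided the family $\{y_\lambda\}_{\lambda > 0}$ stays bounded as $\lambda \to 0^+$, we get $J_\lambda(x_0) = x_0 - \lambda y_\lambda \to x_0$, and any subsequential limit $y_0$ of $y_\lambda$ satisfies $(x_0, y_0) \in E$ because the graph of a maximal totally-M-related relation is closed (a direct consequence of maximality).

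The main obstacle is precisely the boundedness of $\{y_\lambda\}$, and this is where the interiority hypothesis must be converted into a quantitative coercivity estimate. For any unit vector $u \in \R^n$, interiority of $x_0$ supplies some $r > 0$ with $x_0 + r u \in \conv(\dom(E))$, and Carath\'eodory produces a convex decomposition $x_0 + r u = \sum_{i=0}^{n} \lambda_i x_i$ with $x_i \in \dom(E)$, together with partners $y_i \in E(x_i)$. Averaging the monotonicity inequalities $\langle J_\lambda(x_0) - x_i, \, y_\lambda - y_i \rangle \geq 0$ against the weights $\lambda_i$, expanding $J_\lambda(x_0) - x_0 = -\lambda y_\lambda$, and keeping track of the $\sum \lambda_i (x_i - x_0) = ru$ term, leads to an inequality of the shape
\[
 r \, \langle u, y_\lambda \rangle \;\leq\; \lambda \, \|y_\lambda\|^2 + C_u \|y_\lambda\| + D_u,
\]
where $C_u$ and $D_u$ depend only on the chosen $x_i, y_i$. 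Ranging $u$ over a finite $\epsilon$-net of the unit sphere converts this pointwise coercivity into a uniform bound $\sup_{0 < \lambda \leq 1} \|y_\lambda\| < \infty$; this is the genuine technical content of the theorem. Once boundedness is in hand, the extraction of a cluster point and the closed-graph argument from the previous paragraph deliver $(x_0, y_0) \in E$, completing the proof.
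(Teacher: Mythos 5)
Your proposal is correct in substance, but it takes a genuinely different route from the proof the paper is referring to, which is Minty's own.

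The paper's proof (following \cite{Minty61}) is elementary in the sense that it only needs the finite extension lemma \cite[Theorem~1]{Minty62} --- given a finite totally-M-related family and an arbitrary $x$, one can always find a partner $y$ --- together with a finite-intersection-property compactness argument. Interiority of $x_0$ is used there to show that, once enough pairs $(x_i,y_i)\in E$ with $x_0\in\textnormal{int}\,\conv\{x_i\}$ are fixed, the set of admissible partners $y$ for $x_0$ is compact; after that the nested intersection over finite subsets of $E$ is nonempty, and maximality closes the argument. Your proof instead imports the surjectivity of $I+\lambda E$ for maximal monotone $E$ on $\R^n$ (Minty's surjectivity theorem), builds the resolvent $J_\lambda$, sets $y_\lambda = \lambda^{-1}(x_0 - J_\lambda(x_0))$, and extracts a cluster point. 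This is perfectly valid and is the ``modern'' proof from maximal monotone operator theory; it is not circular, since the finite-dimensional surjectivity theorem is proved independently (via Brouwer or a variational truncation argument) without invoking the domain theorem. What it buys is a constructive description of $y_0$ as a resolvent limit; what it costs is that you rely on a substantially deeper black box than the finite extension lemma. Both proofs ultimately turn interiority into a coercivity-type bound, and your Carath\'eodory averaging does this correctly.

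One small but real slip: in your displayed coercivity estimate the $\lambda\|y_\lambda\|^2$ term should appear on the \emph{left}-hand side (equivalently with a negative sign on the right). Expanding $\langle J_\lambda(x_0)-x_i,\,y_\lambda-y_i\rangle\ge 0$ with $J_\lambda(x_0)=x_0-\lambda y_\lambda$ and averaging gives
\begin{equation}
r\,\langle u, y_\lambda\rangle + \lambda\|y_\lambda\|^2 \;\le\; \lambda\,\|\bar y_u\|\,\|y_\lambda\| + D_u,
\end{equation}
so the quadratic term is in your favour (it makes the bound easier, e.g.\ $r\langle u,y_\lambda\rangle\le D_u+\lambda\|\bar y_u\|^2/4$ by completing the square), not an obstruction as your inequality suggests. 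With the sign corrected, the $\epsilon$-net step and the closed-graph conclusion go through exactly as you describe.
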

Recall that relative interior of a set means the interior of the set taken with respect to its affine span. Minty's proof of the above theorem goes via showing that, for any totally-M-related set $E$ and any point $x_{0}$ in the relative interior of $\conv ( \dom (E) )$, there exists a $y_{0}$ such that $E \cup \{ (x_{0}, y_{0}) \}$ is also totally-M-related. This is shown by combining another result of Minty, stated below, with a compactness argument. 
\begin{thm} \cite[Theorem 1]{Minty62}
    Let $x_{1} , \ldots , x_{m}$ and $y_{1} , \ldots , y_{m}$ be points in $\R^n$ such that $\langle x_{i} - x_{j} , y_{i} - y_{j} \rangle \geq 0$ for $i=1, \ldots, m$. Then, for every $x \in \R^n$ there exists a $y \in \R^n$ for which $\langle x - x_{i} , y - y_{i} \rangle \geq 0$. 
\end{thm}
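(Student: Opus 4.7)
The plan is to recast the conclusion as a linear feasibility question and then pair the Farkas lemma with the pairwise monotonicity hypothesis. First I would observe that finding $y \in \R^n$ with $\langle x - x_{i}, y - y_{i}\rangle \geq 0$ for every $i$ is equivalent to locating a common point in the closed half-spaces
\begin{equation}
H_{i} \;=\; \{ y \in \R^n \,:\, \langle x - x_{i}, y \rangle \geq \langle x - x_{i}, y_{i} \rangle \}
\end{equation}
(with the convention $H_{i} = \R^n$ whenever $x = x_{i}$). By the standard theorem of alternatives for finite systems of linear inequalities, $\bigcap_{i} H_{i}$ is nonempty if and only if there do not exist coefficients $\lambda_{1}, \ldots, \lambda_{m} \geq 0$, not all zero, satisfying
\begin{equation}
\sum_{i=1}^{m} \lambda_{i} (x - x_{i}) = 0 \quad \text{and} \quad \sum_{i=1}^{m} \lambda_{i} \langle x - x_{i}, y_{i} \rangle > 0.
\end{equation}
I would then suppose for contradiction that such $\lambda_{i}$ exist.

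Setting $S := \sum_{i} \lambda_{i}$, nonnegativity together with nontriviality forces $S > 0$, and the first identity rearranges to $x = \tfrac{1}{S}\sum_{i} \lambda_{i} x_{i}$. The crux is to exploit monotonicity pairwise: multiplying each $\langle x_{i} - x_{j}, y_{i} - y_{j}\rangle \geq 0$ by $\lambda_{i}\lambda_{j}$ and summing over all $(i,j)$ yields, after expansion,
\begin{equation}
0 \;\leq\; 2S \sum_{i} \lambda_{i} \langle x_{i}, y_{i}\rangle \;-\; 2\,\Big\langle \sum_{i} \lambda_{i} x_{i},\, \sum_{j} \lambda_{j} y_{j} \Big\rangle.
\end{equation}
Substituting $\sum_{i} \lambda_{i} x_{i} = Sx$ and dividing by $2S$ gives
\begin{equation}
\sum_{i} \lambda_{i} \langle x_{i}, y_{i}\rangle \;\geq\; \Big\langle x,\, \sum_{j} \lambda_{j} y_{j}\Big\rangle.
\end{equation}

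Rewriting the purported ``bad'' Farkas sum,
\begin{equation}
\sum_{i} \lambda_{i} \langle x - x_{i}, y_{i}\rangle \;=\; \Big\langle x,\, \sum_{i} \lambda_{i} y_{i} \Big\rangle \;-\; \sum_{i} \lambda_{i} \langle x_{i}, y_{i}\rangle \;\leq\; 0,
\end{equation}
contradicts the assumed strict positivity and closes the argument. I expect the only genuine obstacle to be invoking the correct form of the theorem of alternatives and cleanly dispatching the mild degeneracies (indices where $x = x_{i}$, or the case $S = 0$), both of which are immediate from the nonnegativity of the multipliers. An attempt to induct on $m$ seems less attractive, since adding a new index does not interact in a useful way with the global pairwise monotonicity constraint.
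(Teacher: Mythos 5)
The paper states this as a cited external result (Minty, 1962) and does not include a proof, so there is no in-paper argument to compare against. Your proposed proof is correct and self-contained.

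To check the details: the reformulation as a linear feasibility problem $\langle x-x_i, y\rangle \geq \langle x-x_i, y_i\rangle$, $i=1,\dots,m$, is exact, and the theorem of alternatives you invoke (Gale's form: $Ay\ge b$ is infeasible iff $\exists\,\lambda\ge 0$ with $A^{\T}\lambda=0$ and $b^{\T}\lambda>0$) is the right one. The expansion of $\sum_{i,j}\lambda_i\lambda_j\langle x_i-x_j,\,y_i-y_j\rangle\ge 0$ into $2S\sum_i\lambda_i\langle x_i,y_i\rangle - 2\langle\sum_i\lambda_i x_i,\sum_j\lambda_j y_j\rangle$ is algebraically correct, and together with $\sum_i\lambda_i x_i = Sx$ (valid since $S>0$) it yields $\sum_i\lambda_i\langle x-x_i,y_i\rangle\le 0$, contradicting the Farkas certificate. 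The degenerate cases you flag (some $x=x_i$, or $S=0$) are indeed harmless, the first giving a trivial half-space $\R^n$ and the second being impossible for a nonzero nonnegative $\lambda$. This LP-duality route is arguably more streamlined than Minty's original argument, which proceeds via a variational/compactness argument rather than a theorem of the alternative; your version isolates cleanly the single use of the pairwise monotonicity hypothesis as a positive-semidefinite quadratic-form inequality in the multipliers $\lambda$.
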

In our setting, for any fixed $t$, the set $E = \{ (x, - v_{t}(x)) \, : \,  x \in \support (v_{t}) \}$ is totally-M-related, by Proposition \ref{prop: velocityfieldforcontcontrac}. Thus, Minty's proof in \cite{Minty61} allows us to extend $v_{t}$ to any point in the relative interior of $\conv ( \dom (v_{t}) )$, while preserving the monotonicity property in Propostion \ref{prop: velocityfieldforcontcontrac}. This observation is crucially used in the proof of Theorem \ref{thm: maincont}.

Given a curve of probability measures, the dynamics of its noise-perturbation is described below.
\begin{prop} \label{prop: velocityforconvolution}
    Let $\{\mu_{t}\}$ be a curve of probability measures and $v_{t}$ a time-dependent velocity-field compatible with it. Suppose $\nu$ is a measure having a bounded Lipschitz smooth density. Then, a time-dependent velocity field compatible with the curve $\tilde{\mu_{t}} = \mu_{t} \star \nu$ is given by the conditional expectation
    \begin{equation}
\tilde{v_{t}}(x) = \E \left[ v_{t}(X_{t}) \vert X_{t} + Y = x \right],
    \end{equation}
    where $X_{t} \sim \mu_{t}$, $Y \sim \nu$ is independent of the $X_{t}$. 
\end{prop}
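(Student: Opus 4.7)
The plan is to verify directly that the pair $(\tilde{\mu_t}, \tilde{v_t})$ satisfies the weak continuity equation \eqref{eq: conteqweak}. Given an arbitrary test function $f$, the key device is the ``pulled back'' function
\[ g(x) = \int f(x + y) \d \nu(y) = \E[f(x + Y)], \]
which by Fubini satisfies $\int f \d \tilde{\mu_t} = \int g \d \mu_t$. The first task is to check that $g$ is bounded, Lipschitz, and smooth so that the extended form of \eqref{eq: conteqweak} recalled in the excerpt applies to $g$ along the curve $\{\mu_t\}$. Boundedness is immediate from $|g| \leq \|f\|_{\infty}$; smoothness follows by differentiation under the integral sign, justified by the compact support of $f$ and the finiteness of $\nu$, which also yields $\grad g(x) = \E[\grad f(x+Y)]$; and the Lipschitz bound follows from $|\grad g| \leq \|\grad f\|_{\infty}$.

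Then I would apply the compatibility of $v_t$ with $\mu_t$ to $g$ and substitute the formula for $\grad g$ back in:
\[ \frac{\d}{\d t} \int f \d \tilde{\mu_t} = \frac{\d}{\d t} \int g \d \mu_t = \int \< \grad g, v_t \> \d \mu_t = \E \bigl[ \< \grad f(X_t + Y), v_t(X_t) \> \bigr], \]
where the last equality is another application of Fubini. To conclude, the factor $\grad f(X_t+Y)$ is measurable with respect to the $\sigma$-algebra generated by $X_t+Y$, so by the tower property of conditional expectation it pulls out of the inner expectation:
\[ \E \bigl[ \< \grad f(X_t+Y), v_t(X_t) \> \bigr] = \E \bigl[ \< \grad f(X_t+Y), \E[v_t(X_t) \mid X_t + Y] \> \bigr] = \int \< \grad f , \tilde{v_t} \> \d \tilde{\mu_t}, \]
which is exactly the weak continuity equation for $(\tilde{\mu_t}, \tilde{v_t})$.

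The main technical point to be careful about is that the compatibility of $v_t$ with $\mu_t$ is a priori only tested against compactly supported smooth functions, whereas $g$ is typically not compactly supported. This is why the extension of \eqref{eq: conteqweak} to the class of bounded Lipschitz smooth functions, as noted in the excerpt via \cite{AmbrosioGigliSavare08}, is invoked, and is also the role played by the regularity hypothesis on $\nu$ in ensuring that $\tilde{v_t}$ is a bona fide function (well-defined $\tilde{\mu_t}$-almost everywhere via the explicit conditional density formula). A minor assumption implicit in the argument is that $v_t \in L^1(\mu_t)$ so that $\tilde{v_t}$ is well-defined as a conditional expectation; this is always satisfied in the settings of interest to us, in particular in Example \ref{ex: main}.
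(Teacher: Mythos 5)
Your proof is correct and follows essentially the same route as the paper: pull $f$ back through the convolution via Fubini to the auxiliary function $\psi(y) = \int f(y+z)\,\d\nu(z)$, apply the extended form of the compatibility relation \eqref{eq: conteqweak} to $\psi$ (citing \cite{AmbrosioGigliSavare08} for the passage from test functions to bounded Lipschitz smooth functions), and then identify the conditional expectation by rearranging the double integral. The only noteworthy difference is in bookkeeping: the paper writes $\psi(y) = \int f(x)g(x-y)\,\d x$ with $g$ the density of $\nu$, differentiates the kernel $g$, and integrates by parts back onto $f$ — which is where the boundedness/Lipschitzness/smoothness of $g$ is invoked — whereas you differentiate under the integral in the form $\grad\psi(y) = \E[\grad f(y+Y)]$ directly, for which the regularity of the test function $f$ together with finiteness of $\nu$ suffices. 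Your final step via the tower property is an equivalent packaging of the paper's explicit division by $\mu_t \star g$. Your closing observation that the regularity hypothesis on $\nu$ is really there to make $\tilde v_t$ a genuine (pointwise-defined, sufficiently regular) velocity field rather than for the computation itself is a fair and correct reading of where the hypothesis earns its keep in the larger argument.
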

\begin{proof}
  Let $f$ be a test function. Suppose $\nu$ has density $g$ with respect to the Lebesgue measure. Then,
  \begin{equation}
\begin{split}
    \frac{\d }{\d t} \int f(x) \d (\mu_{t} \star \nu)(x) 
    &= \frac{\d}{\d t } \int \left[ \int f(x) g(x-y)  \d x  \right] \d \mu_{t}(y) \\
    &= \int \bigg\langle \grad_{y} \left( \int f(x) g (x-y) \d x \right) , v_{t}(y) \bigg\rangle  \d \mu_{t}(y) \\
    &= \int \bigg\langle -\int f(x) (\grad g)(x-y) \d x , v_{t} (y) \bigg\rangle  \d \mu_{t}(y) \\
    &= \int \bigg\langle \int \grad f (x) g(x-y) \d x , v_{t} (y) \bigg\rangle  \d \mu_{t}(y) \\
    &= \int \bigg\langle \grad f (x)  , \int v_{t} (y) g(x-y) \d \mu_{t}(y) \bigg\rangle \d x \\
    &= \int \bigg\langle \grad f (x) , \frac{\int v_{t}(y) g (x-y) \d \mu_{t}(y)}{\mu_{t} \star g (x)} \bigg\rangle  \mu_{t} \star g (x) \d x \\
    &= \int \langle \grad f (x) ,  \E \left[ v_{t}(X_{t}) \vert X_{t} + Y = x \right] \rangle \d (\mu_{t} \star \nu)(x).
\end{split}
  \end{equation}
  Thus, the continuity equation $\partial_{t} \tilde{\mu_{t}} + \grad \cdot (\tilde{v_{t}} \tilde{\mu_{t}}) = 0$ is verified. Here the conditions on $g$ are used to justify the second equality.
\end{proof}
We need one more ingredient to allow us to conclude entropic inequalities from compatible velocity-fields.
\begin{prop} \label{prop: divandent}
    Let $\mu_{t}$ be a curve in the space of probability measures and a time-dependent velocity field $v_{t}$ compatible with it. Assume that
    \begin{enumerate}
    \item $\int_{0}^{1} \int \Vert v_{t} \Vert_{2} \d \mu_{t} \d t < \infty$, and
    \item $v_{t}$ is locally-Lipschitz. 
    \end{enumerate}
    Suppose each $\mu_{t}$ has density $\rho_{t}$ with respect to the Lebesgue measure. Then, if $\grad \cdot v_{t} \leq 0$ for all $t$, then $ \int \phi (\rho_{t}) \d x$ is monotonically increasing in $t$ for every convex function $\phi$ defined on $[0, \infty)$ such that $\phi(0) = 0 $.
\end{prop}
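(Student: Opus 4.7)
The plan is to differentiate $t \mapsto \int \phi(\rho_{t})\,\d x$ directly, using the continuity equation $\partial_{t} \rho_{t} = - \grad \cdot (v_{t} \rho_{t})$ and the chain rule, and then perform a single integration by parts that exposes a one-signed integrand. Setting $\psi(r) := r \phi'(r) - \phi(r)$, the pointwise identity
\begin{equation}
\phi'(\rho)\, \grad \cdot (v \rho) \,=\, \grad \cdot (\phi(\rho) v) \,+\, \psi(\rho)\, \grad \cdot v,
\end{equation}
obtained by expanding both sides via the product and chain rules, combines with the continuity equation to give
\begin{equation}
\partial_{t} \phi(\rho_{t}) \,=\, - \grad \cdot (\phi(\rho_{t}) v_{t}) \,-\, \psi(\rho_{t}) \,\grad \cdot v_{t}.
\end{equation}
Integrating over $\R^n$ and discarding the total divergence as a boundary term at infinity, I would obtain
\begin{equation}
\frac{\d}{\d t} \int \phi(\rho_{t})\,\d x \,=\, -\int \psi(\rho_{t})\, \grad \cdot v_{t} \, \d x.
\end{equation}

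To see that the right-hand side is non-negative, I would use that convexity of $\phi$ with $\phi(0) = 0$ forces $\psi \geq 0$ on $[0,\infty)$: the subgradient inequality $\phi(0) \geq \phi(\rho) + \phi'(\rho)(0 - \rho)$ rearranges to $\psi(\rho) \geq 0$, and the argument passes through unchanged with right-derivatives at points where $\phi$ fails to be differentiable. Combined with the hypothesis $\grad \cdot v_{t} \leq 0$, the integrand is non-negative, and monotonicity of $t \mapsto \int \phi(\rho_{t})\,\d x$ is immediate.

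The real obstacle lies in rigorously justifying the interchanges above: differentiation under the integral sign, and the vanishing of the divergence term upon integration. I see two complementary routes. The Eulerian route regularises by approximating $\phi$ by a sequence of bounded smooth convex $\phi_{k}$ with $\phi_{k}(0) = 0$ and $\phi_{k}'$ compactly supported, tests the weak continuity equation \eqref{eq: conteqweak} against a spatial cut-off of $\phi_{k}'(\rho_{t})$, and passes to the limit using the integrability bound $\int_{0}^{1}\int \Vert v_{t} \Vert_{2}\, \d \mu_{t}\, \d t < \infty$ together with monotone convergence; the local Lipschitz hypothesis delivers a classical meaning to $\grad \cdot v_{t}$ almost everywhere. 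The Lagrangian route instead exploits the local Lipschitzness of $v_{t}$ to produce a flow $T_{t}$ with $\mu_{t} = {T_{t}}_{\#}\mu_{0}$, so that $\rho_{t}\circ T_{t} \cdot J_{t} = \rho_{0}$ with Jacobian $J_{t} = \det \D T_{t}$ obeying $\partial_{t} J_{t} = (\grad \cdot v_{t})(T_{t})\, J_{t}$; writing $\int \phi(\rho_{t})\,\d x = \int \phi(\rho_{0}/J_{t})\, J_{t}\,\d x$, pointwise differentiation in $t$ and a change of variables recover the key identity without any integration by parts at all. I expect this Lagrangian route to be the cleanest, since both hypotheses of the proposition fit it naturally: local Lipschitzness guarantees classical characteristics, while the $L^{1}$-integrability of $\Vert v_{t}\Vert_{2}$ against $\d \mu_{t}\, \d t$ both prevents characteristics from escaping on a $\mu_{0}$-positive set and legitimises the differentiation under the integral sign.
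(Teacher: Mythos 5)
Your proposal is sound in outline and both of your routes would lead to a correct proof, but neither coincides with the paper's argument, which takes a third path that entirely sidesteps the regularity issues you flag as the ``real obstacle.'' The paper also begins by producing the flow $T_{t}$ with $\mu_{t} = {T_{t}}_{\#}\mu_{0}$ from hypotheses (1)--(2), and records that $\grad \cdot v_{t} \leq 0$ forces $\vert \det \grad T_{t}\vert \leq 1$; but instead of then differentiating the functional $\int \phi(\rho_{t})\,\d x$ in $t$ (Eulerian or Lagrangian), the paper invokes a \emph{majorisation} criterion from \cite[Lemmas 2.1--2.2]{AishwaryaAlamLiMyroshnychenkoZatarainVera23}: to conclude $\int \phi(\rho_{0})\,\d x \leq \int \phi(\rho_{t})\,\d x$ for all convex $\phi$ with $\phi(0)=0$, it suffices to exhibit, for every compact $K_{0}$, a compact $K_{t}$ with $\vol(K_{t}) \leq \vol(K_{0})$ and $\mu_{t}(K_{t}) \geq \mu_{0}(K_{0})$. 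Taking $K_{t} = T_{t}[K_{0}]$ discharges both requirements at once, by the volume-contracting property and the pushforward relation respectively. This layer-cake comparison works at the level of sets and measures, so no differentiation under the integral sign, no integration by parts, no disposal of boundary terms, and no pointwise interpretation of $\grad \cdot v_{t}$ is required. Your Eulerian computation, with $\psi(r) = r\phi'(r) - \phi(r) \geq 0$ and the identity $\frac{\d}{\d t}\int \phi(\rho_{t})\,\d x = -\int \psi(\rho_{t})\,\grad \cdot v_{t}\,\d x$, is the most self-contained and explanatory of the three and does not depend on the cited lemmas, but pays for it in the regularity bookkeeping you rightly anticipate; your Lagrangian route, reading off $\partial_{t}\bigl[\phi(\rho_{0}/J_{t})\,J_{t}\bigr] = -\psi(\rho_{0}/J_{t})\,(\grad\cdot v_{t})(T_{t})\,J_{t} \geq 0$, is morally closest to the paper (same flow, same Jacobian monotonicity) but still differentiates where the paper substitutes a purely measure-theoretic comparison.
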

\begin{proof}
    The regularity assumptions on $v_{t}$ imply the existence of $T_{t}$ such that $\mu_{t} = {T_{t}}_{\#} \mu_{0}$ (see, for example, \cite[Page 15]{Villani09} or \cite[Proposition 8.1.8]{AmbrosioGigliSavare08}). Further, we have $\grad \cdot v_{t} \leq 0 $ which means that the $T_{t}$ are volume-contracting (that is, $\vert \det \grad T_{t} \vert \leq 1$). 

    The proof proceeds by majorisation as defined in \cite[Definition 5]{AishwaryaAlamLiMyroshnychenkoZatarainVera23}. In light of \cite[Lemma 2.2]{AishwaryaAlamLiMyroshnychenkoZatarainVera23} and \cite[Lemma 2.1]{AishwaryaAlamLiMyroshnychenkoZatarainVera23}, to show $\int \phi (\rho_{0}) \d x \leq \int \phi (\rho_{t}) \d x $ it suffices to show that for every compact $K_{0}$ there exists a compact set $K_{t}$ with volume at most $\vol(K_{0})$, and satisfying $\mu_{0}(K_{0}) \leq \mu_{t}(K_{t})$. In our case, such a $K_{t}$ is obtained by setting $K_{t} = T_{t}[K_{0}]$. This is because $X_{0} \in K_{0}$ implies $T_{t}(X_{0}) \in T_{t}[K_{0}]$, where $X_{0}$ is a random vector with distribution $\mu_{0}$. A similar argument can be employed to show $\int \phi (\rho_{t_{0}}) \d x \leq \int \phi (\rho_{t_{1}}) \d x$ for any $t_{0} \leq t_{1}$.
\end{proof}  
\section{Proof of the main theorems} \label{sec: proofmain}
We will first prove Theorem \ref{thm: maincont}.
\begin{proof}[Proof of Theorem \ref{thm: maincont}]
Let $T_{t}$ be a family of continuously contracting maps in $\R^n$ having smooth trajectories. Call the corresponding velocity field $v_{t}$. Let $X$ a random vector such that $\E \sup_{t} \Vert \frac{\d}{\d t} T_{t}(X) \Vert_{2} < \infty$. Denote by $\mu_{0}$ the distribution of $X$, and set $\mu_{t} := {T_{t}}_{\#} \mu_{0}$ so that $\mu_{t}$ is the distribution of $T_{t}(X)$. The computation in Example \ref{ex: main} shows that the velocity-field $v_{t}$ is compatible with $\mu_{t}$. Consider $\tilde{\mu_{t}} = \mu_{t} \star \nu$, where $\d \nu = g \d x = e^{-V} \d x$. We are interested in $\nu$ being the distribution of $\sqrt{s}Z$, and thus $g = C_{s}e^{-\frac{\Vert x \Vert_{2}^{2}}{2s}}, \grad g(x) = - g (x) \grad V (x) =  - g (x) \frac{x}{s}$. We shall write the proof for $s=1$, the case of a general $s$ works in exactly the same manner. 

The density $g$ is easily seen to be bounded, Lipschitz, and smooth. Thus, by Proposition \ref{prop: velocityforconvolution}, we obtain a velocity-field compatible with $\tilde{\mu_{t}} $: 
\begin{equation}
\begin{split}
    \tilde{v_{t}}(x) = \frac{\int v_{t}(y) g (x - y) \d \mu_{t}(y)}{\int  g (x - y) \d \mu_{t}(y)}.  
\end{split}
\end{equation}
Hence,
\begin{equation}
\begin{split}
\grad \tilde{v_{t}}(x) = &\frac{\int v_{t}(y) 
 \otimes \grad_{x} \left( g (x - y) \right) \d \mu_{t}(y) }{\int  g (x - y) \d \mu_{t}(y)} \\
&- \frac{\int \grad_{x} \left( g (x - y) \right) \d \mu_{t}(y) \otimes \int v_{t}(y) g (x - y) \d \mu_{t}(y)}{\left( \int  g (x - y) \d \mu_{t}(y) \right)^{2}}\\
= & - \frac{\int v_{t}(y) 
 \otimes \grad V (x - y) g (x - y) \d \mu_{t}(y) }{\int  g (x - y) \d \mu_{t}(y)} \\
& +  \frac{\int  \grad V (x - y)g (x - y) \d \mu_{t}(y) \otimes \int v_{t}(y) g (x - y) \d \mu_{t}(y)}{\left( \int  g (x - y) \d \mu_{t}(y) \right)^{2}}.\\
\end{split}
\end{equation}
Let $Y = Y_{t,x}$ be a random vector with density $ f_{x}(y) = \frac{g(x-y)}{ \int g(x-y) \d\mu_{t}(y)}$ with respect to $\mu_{t}$. Then, by taking trace,
\begin{equation} \label{eq: divergencecomputation}
\begin{split}
\grad \cdot \tilde{v}_{t} (x) 
&= - \E \langle v_{t}(Y) , \grad V (x-Y) \rangle  + \langle \E \grad V (x-Y) , \E v_{t}(Y) \rangle \\
&= - \E \langle v_{t} (Y) , x \rangle + \E \langle v_{t}(Y), Y \rangle + \langle x , \E v_{t} (Y) \rangle - \langle \E Y , \E v_{t}(Y) \rangle \\
&= \E \langle v_{t} (Y) , Y \rangle  - \langle \E Y , \E v_{t}(Y) \rangle \\
&= \E \langle v_{t}(Y) - \E v_{t}(Y) , Y- \E Y \rangle.
\end{split}
\end{equation}
Now, since $\E (Y - \E Y) = 0$, $\E v_{t} (Y)$ can be replaced with any constant of choice. We extend $v_{t}$ to $\E Y$ while preserving its monotonicity property established in Proposition \ref{prop: velocityfieldforcontcontrac}. This can be done using Minty's results discussed in Section \ref{sec: prep}. Then, we choose the constant replacing $\E v_{t} (Y)$ to be $v_{t}(\E Y)$, so that 
\begin{equation}
\grad \cdot \tilde{v_{t}} (x) = \E \langle v_{t} (Y) - v_{t} (\E Y) , Y - \E Y \rangle \leq 0.
\end{equation}
Further, 
\[
\begin{split}
\int \Vert \tilde{v}_{t} \Vert_{2} \d \tilde{\mu}_{t}  &= \int \left\Vert  \frac{\int v_{t}(y) g (x - y) \d \mu_{t}(y)}{\int  g (x - y) \d \mu_{t}(y)} \right\Vert_{2} \d \tilde{\mu}_{t} (x)  \\
&\leq \int \Vert v_{t}(y) \Vert_{2} \d \mu_{t} (y) ,
\end{split}
\]
so that $\int_{0}^{1} \int \Vert \tilde{v}_{t} \Vert_{2} \d \tilde{\mu}_{t} \d t < \infty$ follows from $\E \sup_{t} \Vert \frac{\d}{\d t} T_{t}(X) \Vert_{2} < \infty$. The explicit formula for $\grad \tilde{v}_{t}$ makes it clear that $\tilde{v}_{t}$ is locally-Lipschitz in the space variable. Hence, $\tilde{\mu}_{t}$ and $\tilde{v}_{t}$ satisfy the hypothesis required in Proposition \ref{prop: divandent}, thereby establishing Theorem \ref{thm: maincont}.
\end{proof}
Now we use Theorem \ref{thm: maincont} to prove Theorem \ref{thm: main}.

\begin{proof}[Proof of Theorem \ref{thm: main}]
Suppose an arbitrary $1$-Lipschitz map $T: \R^n \to \R^n$ is given. We begin by showing that it suffices to prove Theorem \ref{thm: main} for $s=1$. Note that, $\tilde{T}(x) = \frac{1}{\sqrt{s}} T (\sqrt{s}x)$ is also $1$-Lipschitz. By using the scaling property $h_{\alpha}(\lambda X) = h_{\alpha}(X) + n \log |\lambda|$ and Theorem \ref{thm: main} for $s=1$, we have Theorem \ref{thm: main} for arbitrary $s > 0$:
\begin{equation}
\begin{split}
    h_{\alpha}(X + \sqrt{s}Z) &= h_{\alpha}\left(\sqrt{s} \left( \frac{1}{\sqrt{s}}X + Z \right) \right) = h_{\alpha} \left( \frac{1}{\sqrt{s}}X + Z \right) + \frac{n}{2} \log s \\
    &\geq h_{\alpha} \left( \tilde{T} \left( \frac{1}{\sqrt{s}}X \right) + Z \right) + \frac{n}{2}\log s \\
    &= h_{\alpha} \left( \frac{1}{\sqrt{s}}T(X) + Z \right) + \frac{n}{2} \log s = h_{\alpha}(T(X) + \sqrt{s}Z).
\end{split}    
\end{equation}
Thus, without loss of generality, we assume that $s=1$ for the remainder of this proof.

Suppose $\E \Vert X \Vert_{2} < \infty$. We use the trajectories from \cite[Lemma 1]{BezdekConnelly02},
\begin{equation}
    S_{t}(x,0) = \left( \frac{x + T(x)}{2} + (\cos \pi t) \frac{x - T(x)}{2} , (\sin \pi t) \frac{x - T(x)}{2} \right).
\end{equation}
This defines a continuously contracting family of maps which takes $(x,0) \in \R^{2n}$ to $(T(x),0) \in \R^{2n}$ from time $t=0$ to $t=1$. Since $\E \Vert X \Vert_{2} < \infty$, it is easily verified that $\E \sup_{t} \Vert \frac{\d}{\d t}S_{t}(X,0) \Vert_{2} < \infty$. Therefore, Theorem \ref{thm: maincont} can be invoked to obtain
\begin{equation}
h_{\alpha} ((X,0) + (Z, Z')) \geq h_{\alpha} ((T(X),0) + (Z, Z')) ,
\end{equation}
where $Z, Z'$ are independent standard Gaussians in $\R^n$ so that $(Z, Z')$ is the standard Gaussian random vector in $\R^{2n}$. Using the tensorisation property of R\'enyi entropies,
\begin{equation}
    h_{\alpha}((X,0) + (Z, Z') ) = h_{\alpha}((X + Z,Z')) = h_{\alpha} (X + Z) + h_{\alpha}(Z'), 
\end{equation}
and similarly $h_{\alpha}((T(X),0) + (Z, Z') ) =  h_{\alpha} (T(X) + Z) + h_{\alpha}(Z')$. 
Thus, we have
\[
h_{\alpha} (X + Z) \geq h_{\alpha} (T(X) + Z),
\]
under the assumption $\E \Vert X \Vert_{2} < \infty$. 

To drop the assumption $\E \Vert X \Vert_{2} < \infty$ for $\alpha \in [1, \infty]$ we employ the following truncation argument. Suppose $X$ is an arbitrary $\R^{n}$-valued random vector. Define $X_{k} = X \ind_{\Vert X \Vert_{2} \leq k}$. Then, $X_{k}$ converges weakly to $X$, and $T(X_{k})$ converges weakly to $T(X)$. Thus, $f_{X_{k} + Z}(x) = \E g (x - X_{k})$ and $f_{T(X_{k}) + Z}(x) = \E g (x - T(X_{k}))$ converge to $f_{X + Z}(x)$ and $f_{T(X) + Z}(x)$, respectively. Moreover, $f_{X_{k} + Z}(x), f_{T(X_{k}) + Z}(x) \leq 1$ because $g \leq 1$. Therefore,
for $\alpha >1$, $f^{\alpha}_{X_{k} + Z}(x) \leq f_{X_{k} + Z}(x) $ and $f^{\alpha}_{T(X_{k}) + Z}(x) 
 \leq f_{T(X_{k}) + Z}(x) $. By a variant of the dominated convergence theorem \cite[Section 4.4]{Royden10}, we deduce that 
$\int f^{\alpha}_{X_{k} + Z}(x) \d x  \rightarrow \int f^{\alpha}_{X + Z}(x) \d x$ and $\int f^{\alpha}_{T(X_{k}) + Z}(x) \d x  \rightarrow \int f^{\alpha}_{T(X) + Z}(x) \d x$. That is, $h_{\alpha}(X_{k} + Z) \to h_{\alpha}(X + Z)$ and $h_{\alpha}(T(X_{k}) + Z) \to h_{\alpha}(T(X) + Z)$ as $k \to \infty$, if $\alpha > 1$. By construction, $\E \Vert X_{k} \Vert_{2} \leq k < \infty$ so we already have $h_{\alpha} (X_{k} + Z) \geq h_{\alpha} (T(X_{k}) + Z)$. The desired inequality for $X$ when $\alpha >1$ follows by taking limits. The $\alpha=1$ case follows from \cite[Lemma 5.1 (ii)]{MadimanWang14} since $h_{\alpha} (X + Z) \geq h_{\alpha} (Z) > - \infty$ and $h_{\alpha} (T(X) + Z) \geq h_{\alpha} (Z) > - \infty$ for all $\alpha > 1$. 

\end{proof}
\begin{rem}
    When $n=1$, since ``divergence $=$ derivative'', much stronger conclusions can be drawn by looking at the continuous contractions-part of the proof:
    \begin{enumerate}
    \item The proof works for arbitrary log-concave probability densities $g$ (that is, whenever $V$ is convex). This requires an application of Chebyshev's other inequality \cite{FinkJodeit84} after the first equality in Equation \eqref{eq: divergencecomputation}.
     \item The transport maps induced by the velocity fields $\tilde{v}_{t}$ are contractions.     
    \end{enumerate}
    These one-dimensional results are already known albeit with a different proof \cite{LewisThompson81}.
\end{rem}

\begin{proof}[Proof of Theorem \ref{thm: entropickp+costa}]
    Let $T$ be a Lipschitz map. Denote $\lip(T) = L \leq 1$. Note that Theorem \ref{thm: entropickp} implies 
    \begin{equation} \label{eq: entropickpL}
N (LX + Z) \geq N(T(X) + Z).
    \end{equation}
    To see this, apply Theorem \ref{thm: entropickp} to the $1$-Lipschitz map $T/L: x \mapsto \frac{1}{L}T(x)$ and $s= \frac{1}{L^{2}}$:
    \[
\frac{1}{L^{2}} N(LX + Z) = N \left( X + \frac{1}{L}Z \right) \geq N \left( \frac{1}{L}T(X) + \frac{1}{L}Z \right) = \frac{1}{L^{2}} N (T(X) + Z).
    \]
Theorem \ref{thm: costaEPI} gives 
\begin{equation} \label{eq: costaincreasing}
N(X+Z)-N(X) \geq N(LX + Z) - N(LX) = N(LX + Z) - L^{2} N(X).
\end{equation}
Combining Equations \eqref{eq: entropickpL} and \eqref{eq: costaincreasing}, we get
\[
N(X + Z) - N(X) \geq N(T(X) + Z ) - L^{2} N(X),
\]
as desired. 
   \end{proof}

\bibliographystyle{amsplain}
\bibliography{shoulderofgiants}
\end{document}